\newtheorem{condition**}{A*}
\newtheorem{condition***}{C*}
\newtheorem{condition*}{C}
\newtheorem{proposition}{Proposition}[section]
\newtheorem{definition}{Definition}[section]
\newtheorem{theorem}{Theorem}[section]
\newtheorem{lemma}{Lemma}[section]
\newtheorem{remark}{Remark}[section]
\begin{document}
%
% paper title
% can use linebreaks \\ within to get better formatting as desired
% Do not put math or special symbols in the title.
\title{A Class of Mean-Field Games with Optimal Stopping and its Inverse Problem}

\author{ Jianhui Huang, Tinghan Xie$^*$\thanks{J. Huang and T. Xie are with the Department of Applied Mathematics, The Hong Kong Polytechnic University, Hong Kong
(majhuang@polyu.edu.hk; tinghan.xie@connect.polyu.hk).}
\thanks{The author acknowledges the financial support from RGC grant 15301119 and 15307621, and also is grateful to the helpful comments from Minyi Huang.}}

% make the title area
\maketitle

% As a general rule, do not put math, special symbols or citations
% in the abstract or keywords.
\begin{abstract}This paper revisits the well-studied \emph{optimal stopping} problem but within the \emph{large-population} framework. In particular, two classes of optimal stopping problems are formulated by taking into account the \emph{relative performance criteria}. It is remarkable the relative performance criteria, also understood by the \emph{Joneses preference}, \emph{habit formation utility}, or \emph{relative wealth concern} in economics and finance (e.g., \cite{Abel}, \cite{BC}, \cite{CK}, \cite{DeMarzo}, \cite{ET}, \cite{Gali}, etc.), plays an important role in explaining various decision behaviors such as price bubbles. By introducing such criteria in large-population setting, a given agent can compare his individual stopping rule with the average behaviors of its cohort. The associated mean-field games are formulated in order to derive the decentralized stopping rules. The related consistency conditions are characterized via some coupled equation system and the asymptotic Nash equilibrium properties are also verified. In addition, some \emph{inverse} mean-field optimal stopping problem is also introduced and discussed.
\end{abstract}

% Note that keywords are not normally used for peerreview papers.
\begin{IEEEkeywords}
Consistent condition system; $\epsilon-$Nash equilibrium; Inverse optimal stopping; Mean-field optimal stopping; Relative performance
\end{IEEEkeywords}

% For peer review papers, you can put extra information on the cover
% page as needed:
% \ifCLASSOPTIONpeerreview
% \begin{center} \bfseries EDICS Category: 3-BBND \end{center}
% \fi
%
% For peerreview papers, this IEEEtran command inserts a page break and
% creates the second title. It will be ignored for other modes.
\IEEEpeerreviewmaketitle

\section{Introduction}
In recent years, the study of dynamic optimization of stochastic large-population system has attracted persistent and increasing research attentions.
The agents (or players) in large population system are individually insignificant but collectively impose an important impact on each agent. In mathematical modeling, this feature can be characterized by the state-average coupling structure in the individual dynamics and cost functionals. The large-population systems can be found their applications in various different domains (e.g., engineering, social science, economics and finance, operational research and management, etc.). The interested readers may refer \cite{H10}, \cite{hcm07}, \cite{hmc06} and the reference therein for more details of the solid backgrounds of large-population system.

Regarding the controlled large-population system, it is infeasible for a given agent to collect the ``central" or ``global" information of all agents due to the dimensionality difficulty and complex interactions among the agents. Alternatively, it is more feasible and effective to study the decentralized strategies which depend on the local information only. By ``local information", we mean the optimal control regulator for a given agent, is designed on its own individual state and some quantity which can be computed in an off-line manner. Along this research line, one efficient method is the mean-field game (MFG for short) (e.g., \cite{LL07} or \cite{hcm07}) which fundamental idea is to approximate the initial large-population control problem by its limiting problem through some mean-field term (i.e., the asymptotic limit of state-average). As a consequence, one may design a set of decentralized strategies for the large but finite population system originally analyzed, where each agent needs only to know its own state information and mass-effect computed off-line. Furthermore, it is possible to verify the $\epsilon-$Nash equilibrium property for the derived decentralized strategies in which each individual agent will bear the optimality loss in the level $\epsilon$ depending on the population size. Some recent literature can be found in \cite{B12}, \cite{GLL10}, \cite{hcm07}, \cite{LZ08}, \cite{hhl}, for the study of mean-field games; \cite{HCM12} for cooperative social optimization; \cite{H10}, \cite{NH12} and \cite{NC13} and references therein for models with a major player, etc.

Our paper differs from the existing MFG literature in three main respects. (i) First, we investigate the large population system by its optimal stopping problems instead the already extensively studied optimal control problems. Undoubtedly, the optimal stopping problem itself is not new as it has already been explored in various different fields (for instance, the optimal investment with stopping decision from mathematical finance). Consequently, there exist considerable literature of optimal stopping and its applications. The interested readers are recommended to some monographs (e.g., \cite{o}, \cite{p}, \cite{s}, etc.) and the reference therein for more details. However, to our best knowledge, there has no formal discussion to optimal stopping problems pertinent to large-population framework. For instance, to discuss the behavior interaction of stopping rules and its effects imposed on market investors, as well as the resulting equilibrium. (ii) Second, we introduce the so-called ``relative performance" criteria into our stopping problems within large-population framework. Specially, we formulate and analyze two classes of optimal stopping problems in which a given agent takes into account the relative performance with other agents (in its ``ratio-habit" representation and convex combination). As a sequel, each individual agent aims to solve an optimal stopping problem which is interrelated to others via the terminated state-average of underlying large-population system. (iii) Third, we propose the  \emph{inverse} mean-field optimal stopping problem where an additional manager agent is introduced which can optimally design the payoff functional such that the corresponding stopping rules adopted by all small agents will meet some preferred statistical properties. Recently, there arise a variety of papers concerning to mean-field stopping time but in different setups such as \cite{nutz} for mean-field stopping with continuum agents, and \cite{bgdr} for relaxed optimal stopping.

The introduction of \emph{relative performance} plays some significant role in our problem formulation and the analysis followed. Due to this reason, we would like to present few more words to illustrate its real meanings. First, the relative performance is extensively recognized in economics and decision analysis. For example, it is well documented in \cite{Gomez} that the agents in an economy will manifest substantial preferences exogenously defined over their own consumption as well as the contemporaneous average consumption of a reference group. Such reference group is also called the agent's countrymen which corresponds to the large-population in our setup, and these preferences are often termed in economics by ``\emph{keeping up with the Joneses}" (KUJ) (\cite{Abel}, \cite{CK}, \cite{Gali}, \cite{Gomez}). For short, the \emph{Joneses utility or preference}. Second, it is worth noting that the relative performance or relative wealth concern is also addressed in some optimal investment problem from mathematical finance (e.g. \cite{ET}). As stated in \cite{ET}, ``a return of $5\%$ during the crisis is not equivalent to the same return during a financial bubble". Therefore, it is necessary to study the interactions among all investors (agents) based on the simplified comparison of the performance of their competitors (peers). The intuition behind is that the human being tends to compare themselves to their peers (or, cohorts) and this effect is also supported by empirical studies in economics and social science. Third, the relative performance also arise naturally when considering the benchmark index tracking or the habit persistence preference. In particular, in case there exists some market systematic risk. More reference literature of relative performance can be found in \cite{Veblen} for its sociological part, and \cite{Abel}, \cite{BMW}, \cite{CMP}, \cite{DeMarzo}, \cite{Gali}, \cite{Gomez} for economic part in which the discrete-time model and framework are considered.

Motivated by the above discussion, we study the large population optimal stopping by considering the related \emph{relative performance}. Roughly speaking, we concentrate on a large population system with $N$ symmetric individual agents. These small agents execute stopping decisions by comparing the stopping rules applied by other peers. Specially, each agent takes into account a \emph{convex combination} of his stopping performance (with the convex weight $\theta \in (0, 1)$ and the relative concern to the average behaviors with weight $1-\theta$). In this way, there arises some natural interactions among agents and therefore leads to some large-population optimal stopping games. This corresponds to the economy in which the equilibrium consequences of an economy populated by agents with keeping up with the Joneses preferences. We are more interested to the asymptotic behavior when $N$ tends to infinity and in this case, the situation can be considerably simplified through the mean-field games (\cite{LL07}). Based on it, we can derive the decentralized stopping rules as well as the related consistency condition.

The rest of this paper is organized as follows. In Section \uppercase\expandafter{\romannumeral2}, we formulate two classes of mean-field optimal stopping problems in a large-population framework. The first class arises from the \emph{best time to sell} problem whereas the second class is related to the \emph{valuation of natural resources} problem. Both classes consider the \emph{relative performance} thus the individual agents are coupled via their payoff functionals. Section \uppercase\expandafter{\romannumeral3} aims to study the consistency condition or Nash certainty equivalence (NCE) system of the mean-field optimal stopping problems. Section \uppercase\expandafter{\romannumeral4} studies the relevant asymptotic $\epsilon-$Nash equilibrium property. Section \uppercase\expandafter{\romannumeral5} discusses the \emph{inverse mean-field optimal stopping} problem. Section \uppercase\expandafter{\romannumeral6} provides the conclusion of our work.

\section{Formulation}

We consider a stochastic large population system with $N$ negligible agents, denoted respectively by $\mathcal{A}_i, 1 \leq i \leq N. $ The state dynamics of all agents are given on a complete probability space $(\Omega,
\mathcal F, \mathbb{P})$ on which a standard $N$-dimensional Brownian motion $\{W_i(t),\ 1\le i\leq N\}_{t \geq 0}$ is defined. We denote by  $\{\mathcal{F}^{i}_t\}_{t \geq 0}$ the natural filtration generated by $i^{th}$ Brownian motion $W_i$ and $\mathcal{F}^{i}_0$ contains all $\mathbb{P}-$null sets of $\mathcal F.$ Consequently, $\mathcal F_t=\bigvee_{i=1}^N\mathcal F^{i}_t$ the full information of large population system up to time $t$. We denote $\mathcal{S}$ the set of all stopping times of the filtration $\{\mathcal{F}_t\}_{t \geq 0},$ $\mathcal{S}^{i}$ the set of all stopping times of the filtration $\{\mathcal{F}_t^{i}\}_{t \geq 0}.$ Hereafter, $\mathcal{T}=(\tau_1, \cdots, \tau_{N})$ represents the set of stopping strategies of all $N$ agents; $\tau_{-i}=(\tau_1, \cdots, \tau_{i-1},$ $\tau_{i+1}, \cdots \tau_{N})$ the stopping strategies except $\mathcal{A}_i$. Without regarding ordinal notation, we write $\mathcal{T}=(\tau_i, \tau_{-i}).$

\subsection{One standard optimal stopping problem}
To start, we first recall the following well-studied optimal stopping problem for single agent which is named \emph{best time to sell}.
The agent's state dynamics is given by a geometric Brownian motion (GBM):
\begin{equation}\label{1e1}\left\{
\begin{aligned}
&dx_i(t)=\alpha x_i(t)dt+\sigma x_i(t)dW_i(t),\\
&x_i(0)=x
\end{aligned}\right.\end{equation}where $\alpha \in \mathbb{R}, \sigma>0$ are respectively the return and volatility rate. The initial endowment $x>0$. Suppose $x_i$ denotes the value process of an asset, and the owner of this asset may sell it at any time, but has to pay a fixed transaction fee $K>0.$ In this case, the owner's objective is to solve the following optimal stopping problem:\begin{equation}\nonumber
\max_{\tau_i \in \mathcal{S}^{i}}J_i(\tau_i)=\max_{\tau_i \in \mathcal{S}^{i}}\mathbb{E}\left\{e^{-\beta\tau_i}(x_i(\tau_i)-K)\right\}.
\end{equation}Here, $\beta>0$ is the discounted factor and the payoff (gain) functional can be rewritten as:\begin{equation}\label{1e2}
J_i(\tau_i)=\mathbb{E}\left\{e^{-\beta\tau_i}x_i(\tau_i)-e^{-\beta\tau_i}K\right\}.
\end{equation}Note that the transaction cost $K$ is fixed and should be same for all individual agents because it is assigned by the market regulator or planner. This observation provides the key linkage when we introduce the inverse mean-field optimal stopping problem later. Moreover, we assume all agents in our large population system are symmetric or statistical ``homogeneous" in that they own the same coefficients $(\alpha, \sigma, \beta, K)$.

\subsection{Relative performance in large-population system}
First, we should note that the following utility and preference functional is introduced in \cite{Gomez}: \begin{equation}\nonumber u(c, C)=\frac{c^{1-\alpha}}{1-\alpha}C^{\gamma \alpha}\end{equation}where $c$ denotes the agents' individual consumption and $C$ denotes the consumption average or per capital consumption in given reference group; $\alpha>0$ is the constant relative risk-aversion (CRRA) coefficient; $0 \leq \gamma<1$ is called the ``\emph{Joneses parameter}" which increases the weight of average consumption per capita $C$
and makes the individual's marginal consumption more valuable since it helps individual agent to keep
up with the peers. More insights can be gained if we further assume $\gamma=\frac{\alpha-1}{\alpha}$ for $\alpha>1$ thus \begin{equation}\label{ratiohabit} u(c, C)=\frac{(c/C)^{1-\alpha}}{1-\alpha}\end{equation}which becomes the standard ``\emph{ratio-habit}" representation proposed in \cite{Abel}.
Moreover, $(1-\alpha)$ represents the average consumption elasticity of marginal utility. In addition, we remark \eqref{ratiohabit} specifies the the keeping up with the Joneses preference in exogenous way, and it provides one example of relative performance in economic literature.

Second, given the above standard optimal stopping problem $\eqref{1e1}$, $\eqref{1e2}$ and the relative performance from Joneses preference \eqref{ratiohabit}, as well as the striking large-population feature, it is natural to consider the relative performance based on $e^{-\beta\tau_i}x_i(\tau_i),$ the principal term in payoff functional \eqref{1e2}. As the response, we construct the following criteria term\begin{equation}\label{tax}\frac{e^{-\beta\tau_i}x_i(\tau_i)}{l_1+l_2\left(\frac{1}{N}\sum\limits_{j=1}^Ne^{-\beta\tau_j}x_j(\tau_j)\right)}
\end{equation}which is the ratio between the individual discounted truncated state $e^{-\beta\tau_i}x_i(\tau_i)$ and some affine function upon the average $\frac{1}{N}\sum\limits_{j=1}^Ne^{-\beta\tau_j}x_j(\tau_j)$ for $l_1, l_2>0.$ Moreover, $l_2$ can stand for the degree of relative concern to keep us with their peers (as discussed in \cite{Gomez}).

\begin{remark}First, the criteria \eqref{tax} can be viewed as some modified version of ``ratio-habit" term \eqref{ratiohabit} by considering the optimal stopping outcome (note that $e^{-\beta\tau_i}x_i(\tau_i)$ denotes the discounted stopped state for single agent). Second, if let $l_1=0,$ \eqref{tax} can be connected to the Boltzmann-Gibbs distribution as follows:\begin{equation}\nonumber\begin{aligned}
&\frac{e^{-\beta\tau_i}x_i(\tau_i)}{l_2\left(\frac{1}{N}\sum\limits_{j=1}^Ne^{-\beta\tau_j}x_j(\tau_j)\right)}
=\underbrace{\frac{e^{-\beta\tau_i}x_i(\tau_i)}{\sum\limits_{j=1}^Ne^{-\beta\tau_j}x_j(\tau_j)}}_{\text{Boltzmann-Gibbs distribution}}\cdot\underbrace{l_2^{-1}N}_{\text{Scaled large population size or market capacity}}
\end{aligned}\end{equation}There are some literature to discuss the Boltzmann-Gibbs distribution in economics and wealth allocation such as the agent-based DSGE (dynamic stochastic general equilibrium) model (\cite{BM})
\end{remark}

\begin{remark} From panel data analysis in statistics, the average term $\frac{1}{N}\sum\limits_{j=1}^Ne^{-\beta\tau_j}x_j(\tau_j)$ in \eqref{tax} can be viewed as the cross-section data (or, longitudinal data) built on the truncated terminal wealth $\{x_j\}_{j=1}^{N}$ over all terminated times $\{\tau_j\}_{j=1}^{N}$.
Note that different agents will execute different stopping times even though they may have the same distribution.
Moreover, the affine function $l_1+l_2(\frac{1}{N}\sum\limits_{j=1}^Ne^{-\beta\tau_j}x_j(\tau_j))$ can represent some market primitive. For example, the proportional (revenue) tax rate which consists of two parts: the tax basis $l_1>0$ and the surplus tax part which is monotonic with the industry average $\frac{1}{N}\sum\limits_{j=1}^Ne^{-\beta\tau_j}x_j(\tau_j)$ for $l_2>0.$ Therefore, the criteria \eqref{tax} actually represents some after-tax stopped value based on the tax numeraire $l_1+l_2(\frac{1}{N}\sum\limits_{j=1}^Ne^{-\beta\tau_j}x_j(\tau_j))$.
\end{remark}

\subsection{Convex combination of relative performance}

Now we present more details of the convex combination which is important to construct our functional.
One example of convex functional can be found in is recent paper \cite{ET} where\begin{equation}\label{convex}\mathbb{E}U_i\left((1-\lambda)X^{i}_{T}+\lambda\left(X^{i}_{T}
-\frac{1}{N}\sum_{j=1}^{N}X^{j}_{T}\right)\right)\end{equation}where $U_i$ is the utility function of investor $\mathcal{A}_i$ while $\frac{1}{N}\sum_{j=1}^{N}X^{j}_{T}$ is the average behavior of all investors. It can be understood by the investors' \emph{relative wealth concern} which may help us to explain some financial bubble and negative risk premium. First, agents may display ``external habit formation" (EHF) in their
preferences. In this case, the utility of the investors depends on the wealth of their peers (the ``Joneses")
and investors bias their portfolio holdings towards securities which are correlated with the wealth of their
peers so as to ``keeping up with the Joneses". Second, relative wealth concerns may also arise endogenously,
without assuming EHF preferences. \cite{DeMarzo} shows that individuals with
standard preferences might care about the wealth of their peers because competition for non-diversifiable
assets in limited supply drives their price up; if investors cannot compete in wealth with their peers
they might be left out of the market.

Another supporting example of convex combination is given in \cite{NCMH} where the LQG mean-field game with leader-follower control is discussed.
Specially, \cite{NCMH} introduces a convex combination of cost functional based on a trade-off
between moving towards a common reference trajectory and
keeping cohesion of the flock of leaders by also tracking their
centroid: \begin{equation}\label{convex2}
\phi^{L}(z^{L, N_{L}})(\cdot):=\lambda h(\cdot)+(1-\lambda)z^{L, N_{L}}(\cdot)\end{equation}where $\lambda \in (0,1)$ is the scalar convex index,
$h(\cdot)$ is some reference trajectory while $z^{L, N_{L}}(\cdot):=\frac{1}{N_{L}}\sum_{i=1}^{N_{L}}z_{i}^{L}$ is the centroid of leader group.

\subsection{Optimal stopping of MFG with relative performance (I)}

Motivated by the above relative preference functional \eqref{ratiohabit}, \eqref{tax} and convex functional \eqref{convex}, \eqref{convex2}, we introduce the following optimal stopping problem with relative payoff functional:
\begin{equation}\label{1e4}
\max_{\tau_i,\tau_{-i} \in \mathcal{S}}\mathcal{J}_i(\tau_i,\tau_{-i})=\max_{\tau_i,\tau_{-i} \in \mathcal{S}}\mathbb{E}\left\{{C}_{\theta}(\tau_i,\tau_{-i})-e^{-\beta\tau_i}K\right\}
\end{equation}where the convex combination of relative performance ${C}_{\theta}(\tau_i,\tau_{-i})$ is given by\begin{equation}\label{e6}
{C}_{\theta}(\tau_i,\tau_{-i})=\theta \underbrace{e^{-\beta\tau_i}x_i(\tau_i)}_{\text{absolute performance}}+(1-\theta)\underbrace{\frac{e^{-\beta\tau_i}x_i(\tau_i)}{l_1+l_2\left(\frac{1}{N}\sum\limits_{j=1}^
Ne^{-\beta\tau_j}x_j(\tau_j)\right)}}_{\text{affine relative performance}}.
\end{equation}In its full details, the payoff functional for agent $\mathcal{A}_i$ can be written as follows\begin{equation}\label{1e5}
\mathcal{J}_i(\tau_i,\tau_{-i})=\mathbb{E}\left\{\theta e^{-\beta\tau_i}x_i(\tau_i)+(1-\theta)\frac{e^{-\beta\tau_i}x_i(\tau_i)}{l_1+l_2\left(\frac{1}{N}\sum\limits_{j=1}^Ne^{-\beta\tau_j}x_j(\tau_j)\right)}-e^{-\beta\tau_i}K\right\}.
\end{equation}Here, $\theta\in[0,1]$ is the convex weight index. The stopping time $\tau_i \in \mathcal{S},$ the set of all stopping times of the filtration $\{\mathcal{F}_t\}_{t \geq 0}$. We write the above relative performance functional by $\mathcal{J}_i(\tau_i,\tau_{-i})$ to emphasize its dependence on both $\tau_i$ and $\tau_{-i}$ due to the weakly coupling structure in payoff functionals. More explanations to the convex combination are as follows.
\begin{remark}
(\romannumeral1) If $\theta=0$, \eqref{1e5} becomes
\begin{equation}\nonumber
\mathcal{J}_i(\tau_i,\tau_{-i})=\mathbb{E}\left\{ e^{-\beta\tau_i} \Bigg[ \frac{x_i(\tau_i)}{l_1+l_2\left(\frac{1}{N}\sum\limits_{j=1}^Ne^{-\beta\tau_j}x_j(\tau_j)\right)}-K\Bigg]\right\}.
\end{equation}
In this case, only the relative performance by comparing the selling values among all agents is considered.

(\romannumeral2) If $\theta=1$, \eqref{1e5} takes the following form\begin{equation}\nonumber
\mathcal{J}_i(\tau_i,\tau_{-i})=\mathbb{E}\Bigg\{e^{-\beta\tau_i}\Big( x_i(\tau_i)-K\Big)\Bigg\}
\end{equation}
which is the standard performance criteria for classical \emph{best time to sell} problem.
\end{remark}

By taking different values of $\theta$ in \eqref{1e5}, the individual agents can maximize the expectation of his trade-off between the classical criteria and the relative performance criteria. This functional is based on a balance between the absolute value of selling and the cohesion of other peers. Now, we formulate the following large-population optimal stopping problem.
\\

\textbf{Problem (I)}
Find a stopping strategy set $\mathcal{T}=(\tau_1, \cdots, \tau_{N})$ to maximize
$\mathcal{J}_i(\tau_i,\tau_{-i})$ where $\tau_i \in \mathcal{S}^{i}$ for $1 \leq i \leq N.$\\

\begin{remark}Note that here we consider $\tau_i$ to be taken from $\mathcal{S}^{i}$, the set of all stopping times of the filtration $\{\mathcal{F}^{i}_t\}_{t \geq 0}$. In that case, we call $\tau_i$ the \emph{decentralized} stopping rules as it need only to be adapted to the filtration generated by $\mathcal{A}_i.$\end{remark}

\subsection{Optimal stopping of MFG with relative performance (II)}

Now, we present another optimal stopping problem arising from large population system. We still consider a geometric Brownian motion (GBM) for individual agent:
\begin{equation}\nonumber\left\{
\begin{aligned}
&dx_i(t)=\alpha x_i(t)dt+\sigma x_i(t)dW_i(t),\\
&x_i(0)=x.
\end{aligned}\right.\end{equation}We consider a firm producing some natural resources (crude oil, natural gas, etc) with the market price process given by $x_i$. The
running profit of this production is given by a nondecreasing function $f$ depending on the market price. The given firm may decide at any time to stop the production at a fixed constant cost
$K$. Therefore, the real option value of the firm can be measured by the following optimal stopping problem:\begin{equation}\max_{\tau_i \in \mathcal{S}^{i}}{J}_i(\tau_i)=\max_{\tau_i \in \mathcal{S}^{i}}\mathbb{E}\left\{\int_0^{\tau_i} e^{-\beta t}f(x_i(t))dt-e^{-\beta\tau_i}K\right\}.
\end{equation}We assume $f(\cdot)>0$ is nondecreasing and Lipschitz continuous function. Considering the relative performance, similar to \eqref{1e5}, we can introduce the following payoff functional
\begin{equation}\label{1e8}
\mathcal{J}_i(\tau_i,\tau_{-i})=\mathbb{E}\left\{{C}_{\theta}(\tau_i,\tau_{-i})-e^{-\beta\tau_i}K\right\}
\end{equation}where$${C}_{\theta}(\tau_i,\tau_{-i})=
\Bigg[\theta \int_0^{\tau_i} e^{-\beta t}f(x_i(t))dt+\frac{(1-\theta)\int_0^{\tau_i}e^{-\beta t}f(x_i(t))dt}{l_1+l_2\left(\frac{1}{N}\sum\limits_{j=1}^N \int_0^{\tau_j}e^{-\beta t}f(x_j(t))dt\right)}\Bigg].$$Now, we formulate the following large-population optimal stopping problem.\\

\textbf{Problem (II)}
Find a stopping strategy set $\mathcal{T}=(\tau_1, \cdots, \tau_{N})$ to maximize
$\mathcal{J}_i(\tau_i,\tau_{-i})$ where $\tau_i \in \mathcal{S}^{i}$ for $1 \leq i \leq N.$

\section{Consistency condition of mean-field optimal stopping problem}

\subsection{Consistency condition for Problem (I)}
Now we first consider the consistency condition of Problem (I). Due to the payoff functional \eqref{1e5}, we introduce
\begin{equation}\nonumber
\widetilde{\theta}_1=\widetilde{\theta}_1(N, \mathcal{T}, \{x_j\}_{j=1}^{N}):=\theta+\frac{1-\theta}{l_1+l_2\left(\frac{1}{N}\sum\limits_{j=1}^Ne^{-\beta\tau_j}x_j(\tau_j)\right)}.
\end{equation}As $\widetilde{\theta}_1\neq0$ thus we can define $K'_1=\frac{K}{\widetilde{\theta}_1}$. Then the payoff functional \eqref{1e5} becomes\begin{equation}\nonumber
\mathcal{J}_i(\tau_i,\tau_{-i})=\mathbb{E}\Bigg\{\widetilde{\theta}_1e^{-\beta\tau_i}\Big(x_i(\tau_i)-K'_1\Big)\Bigg\}.
\end{equation}Considering the decentralized optimal stopping $\tau_i \in \mathcal{S}^{i},$ and by law of large numbers, we have
\begin{equation}\nonumber
\lim_{N\rightarrow+\infty}\frac{1}{N}\sum\limits_{j=1}^Ne^{-\beta\tau_j}x_j(\tau_j)=\mathbb{E}\Big(e^{-\beta\tau}x(\tau)\Big)
\end{equation}for some $(\tau, x)$ following the same distribution with $(\tau_j, x_j)$ for $1\leq j\leq N$.
Then we obtain\begin{equation}\label{e8}\left\{
\begin{aligned}
&\bar{\theta}_1:=\lim_{N\rightarrow+\infty}\widetilde{\theta}_1=\theta+\frac{1-\theta}{l_1+l_2\mathbb{E}\Big(e^{-\beta\tau}x(\tau)\Big)},\\
&\bar{K}_1:=\lim_{N\rightarrow+\infty}K'_1=\frac{K}{\bar{\theta}_1}.
\end{aligned}\right.\end{equation}Next, we get the following auxiliary mean-field optimal stopping functional
\begin{equation}\label{e9}
J_i(\tau_i)=\mathbb{E}\Bigg\{\bar{\theta}_1e^{-\beta\tau_i}\Big(x_i(\tau_i)-\bar{K}_1\Big)\Bigg\}=\bar{\theta}_1\mathbb{E}\Bigg\{e^{-\beta\tau_i}\Big(x_i(\tau_i)
-\bar{K}_1\Big)\Bigg\}
\end{equation}
where $\bar{\theta}_1$ and $\bar{K}_1$ are defined in \eqref{e8}.

Now, we formulate the auxiliary \emph{limiting} mean-field optimal stopping problem for Problem (\textbf{I}) (For short, we denote it as (\textbf{LI})).\\

\textbf{Problem (LI)} Find a stopping strategy set $\mathcal{T}=(\tau_1, \cdots, \tau_{N})$ to maximize
$J_i(\tau_i)$ for $1 \leq i \leq N$ where $\tau_i \in \mathcal{S}^{i}.$

Note that the maximization of $J_i(\tau_i)$ is equivalent to the maximization of $\mathbb{E}\{e^{-\beta\tau_i}(x_i(\tau_i)
-\bar{K}_1)\}.$ Introduce the value function$$v(x):=\sup\limits_{\tau_i\in \mathcal{S}^{i}}\mathbb{E}\Bigg\{e^{-\beta\tau_i}\Big(x_i(\tau_i)-\bar{K}_1\Big)\Bigg\}.$$Then from the standard results (e.g., \cite{o}, \cite{p}, \cite{s}), we have
\begin{equation}\nonumber
\beta v(x)-\alpha x v'(x)-\frac{1}{2}\sigma^2x^2v''(x)=0,
\end{equation}
for $x<x^*$ and $v(x)$ should be represented by\begin{equation}\label{valuefunction1}v(x)=Ax^{k_1}+Bx^{k_2}\end{equation}for some $(A, B).$ Here, $(k_1, k_2)$ are solutions of the following quadratic equation:
$$\frac{1}{2}\sigma^{2}k^{2}+(\alpha-\frac{1}{2}\sigma^{2})k-\beta=0.$$
It is checkable that\begin{equation}\label{k12}\left\{
\begin{aligned}
&k_1=\frac{1}{2}-\frac{\alpha}{\sigma^2}-\sqrt{\Big(\frac{1}{2}-\frac{\alpha}{\sigma^2}\Big)^2+\frac{2\beta}{\sigma^2}}<0,\\
&k_2=\frac{1}{2}-\frac{\alpha}{\sigma^2}+\sqrt{\Big(\frac{1}{2}-\frac{\alpha}{\sigma^2}\Big)^2+\frac{2\beta}{\sigma^2}}>1.
\end{aligned}\right.\end{equation}We assume $\beta >\alpha,$ then the \emph{decentralized} optimal stopping rule for $\mathcal{A}_i$ is characterized by
\begin{equation}\label{e11}
\tau_i^{*}=\inf\{t:\ x_i(t)\geq x^*\},\ \ x^*=\bar{K}_1\cdot\frac{k_2}{k_2-1}.
\end{equation}Then we can reformulate as\begin{equation}\nonumber \left\{
\begin{aligned}
&\tau_i^{*}=\inf\{t:\ W_i(t)\geq a't+b'\},\\
&a'=\frac{\sigma}{2}-\frac{\alpha}{\sigma}, \quad b'=\frac{1}{\sigma}\ln\left(\frac{x^*}{x}\right)=\frac{1}{\sigma}\ln\left(\frac{\bar{K}_1}{x}\cdot\frac{k_2}{k_2-1}\right).
\end{aligned}\right.\end{equation}
We can construct $\tau^{*} \sim \tau_i^{*}$ (with the same distribution) by
\begin{equation}\label{e13}
\tau^{*}=\inf\{t:\ W(t)\geq a't+b'\}.
\end{equation}for some Brownian motion $W.$ For $\forall\ \lambda\in \mathbb{R},\ \mathbb{E}\Big(e^{\lambda W_\tau-\frac{1}{2}\lambda^2\tau}\Big)=1$
thus\begin{equation}\label{e15}
\mathbb{E}\Big(e^{-\beta\tau^{*}}\Big)=e^{-\lambda b'}
\end{equation}for $\lambda$ satisfying\begin{equation}\lambda^2-2a'\lambda-2\beta=0.\end{equation}
Note that $\beta>0$ implies $\Delta=4(a')^2+8\beta>0$ thus the solution pair becomes\begin{equation}\left\{
\begin{aligned}
&\lambda_+=a'+\sqrt{(a')^2+2\beta},\\
&\lambda_-=a'-\sqrt{(a')^2+2\beta}.
\end{aligned}\right.\end{equation}Further it is checkable that $\lambda_-=k_1\sigma, \lambda_+=k_2\sigma.$ Notice the stopping rule \eqref{e11} and $x_i(0)=x$, so we assume $x<x^*$ thus $b'=\frac{1}{\sigma}\ln\left(\frac{x^*}{x}\right)>0$. By $\beta>0,\tau\geq 0$, we have\begin{equation}
\mathbb{E}\Big(e^{-\beta\tau^{*}}\Big)=e^{-\lambda_{+} b'}.
\end{equation}It can be calculated that
\begin{equation}\nonumber
l_1+l_2\Big(\frac{K}{\bar{\theta}_1}\cdot\frac{k_2}{k_2-1}\Big)^{1-\frac{\lambda_+}{\sigma}}x^{\frac{\lambda_{+}}{\sigma}}=\frac{1-\theta}
{\bar{\theta}_1-\theta}
\end{equation}thus we have\begin{equation}\nonumber
l_1+l_2\Big(\frac{K}{\bar{\theta}_1}\cdot\frac{k_2}{k_2-1}\Big)^{1-k_2}x^{k_2}=\frac{1-\theta}{\bar{\theta}_1-\theta}.
\end{equation}In particular, if $\theta=0, l_1=0$,
\begin{equation}\label{e17}
\bar{\theta}_1=x^{-1}l_2^{-\frac{1}{k_2}}\Big(\frac{Kk_2}{k_2-1}\Big)^{\frac{k_2-1}{k_2}}.
\end{equation}When $0<\theta\leq1$, the equation
\begin{equation}\nonumber
l_2\bar{\theta}_1^{k_2-1}\Big(\frac{Kk_2}{k_2-1}\Big)^{1-k_2}x^{k_2}+l_1=\frac{1-\theta}{\bar{\theta}_1-\theta}
\end{equation}becomes the consistency condition for $\bar{\theta}_1$.
\begin{proposition}The consistent condition
\begin{equation}\label{nce1}
l_2\bar{\theta}_1^{k_2-1}\Big(\frac{Kk_2}{k_2-1}\Big)^{1-k_2}x^{k_2}+l_1=\frac{1-\theta}{\bar{\theta}_1-\theta}
\end{equation}admits one unique solution.
\end{proposition}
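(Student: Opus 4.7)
The plan is to recast \eqref{nce1} as the root-finding problem $F(y)=0$ on the natural domain $y=\bar{\theta}_1\in(\theta,\infty)$, where
\[
F(y) := l_2\,y^{k_2-1}\Bigl(\frac{Kk_2}{k_2-1}\Bigr)^{1-k_2}x^{k_2}+l_1-\frac{1-\theta}{y-\theta}.
\]
I would then prove existence and uniqueness separately, the uniqueness piece coming from strict monotonicity, the existence piece from the intermediate value theorem.

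For monotonicity, I would compute $F'(y)$ and argue that both terms contribute non-negatively. Indeed, since $k_2>1$ (as established in \eqref{k12}) and the prefactor $l_2\bigl(\frac{Kk_2}{k_2-1}\bigr)^{1-k_2}x^{k_2}$ is strictly positive, the map $y\mapsto l_2 y^{k_2-1}(\cdots)$ is strictly increasing on $(0,\infty)$. Moreover, for $0<\theta<1$ we have $1-\theta>0$, so $-\frac{1-\theta}{y-\theta}$ is likewise strictly increasing on $(\theta,\infty)$. Hence $F$ is strictly increasing on the full domain, which immediately yields uniqueness.

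For existence, I would analyze the two boundary limits. As $y\downarrow\theta$, the first two terms remain bounded (in fact converge to $l_2\theta^{k_2-1}(\cdots)+l_1$), while $\frac{1-\theta}{y-\theta}\to+\infty$, so $F(y)\to-\infty$. As $y\to+\infty$, the power term $l_2 y^{k_2-1}(\cdots)$ diverges to $+\infty$ while $\frac{1-\theta}{y-\theta}\to 0$, so $F(y)\to+\infty$. Since $F$ is continuous on $(\theta,\infty)$ and changes sign, the intermediate value theorem gives at least one root, and strict monotonicity pins it down to exactly one.

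I do not anticipate any serious obstacle: the main point is simply to verify the sign conditions $k_2-1>0$, $1-\theta>0$, $l_1,l_2>0$ that make both terms of $F$ monotone in the same direction. The only subtlety worth flagging is that the natural range of $\bar{\theta}_1$ dictated by its definition \eqref{e8}, namely $\bar{\theta}_1\in\bigl(\theta,\theta+\tfrac{1-\theta}{l_1}\bigr]$, is automatically contained in $(\theta,\infty)$, so the unique root produced by the argument is admissible; the degenerate case $\theta=1$ is excluded since then \eqref{1e5} collapses to the classical payoff and the consistency equation trivializes.
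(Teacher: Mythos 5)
Your proof is correct and is essentially the same as the paper's: the paper likewise treats the left-hand side as a strictly increasing function $f(\bar{\theta}_1)$ and the right-hand side as a strictly decreasing function $g(\bar{\theta}_1)$ on $(\theta,+\infty)$, compares the boundary behaviors as $\bar{\theta}_1\to\theta^{+}$ (where $f$ stays finite and $g\to+\infty$) and as $\bar{\theta}_1\to+\infty$ (where $f\to+\infty$ and $g\to 0$), and concludes there is a unique intersection. Your reformulation as a single function $F=f-g$ with the intermediate value theorem is the same argument in different packaging, with the added (harmless) observations about the admissible range of $\bar{\theta}_1$ and the degenerate case $\theta=1$.
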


\begin{proof}LHS: note that $k_2>1,$ $f(\bar{\theta}_1):=l_2\bar{\theta}_1^{k_2-1}\Big(\frac{Kk_2}{k_2-1}\Big)^{1-k_2}x^{k_2}+l_1$ is an increasing function of $\bar{\theta}_1$ with $\lim\limits_{\bar{\theta}_1\longrightarrow +\infty}f(\bar{\theta}_1)=+\infty$ and $\lim\limits_{\bar{\theta}_1\longrightarrow \theta^{+}}f(\bar{\theta}_1)<+\infty$
RHS: note that $\bar{\theta}_1>\theta,$ and $g(\bar{\theta}_1):=\frac{1-\theta}{\bar{\theta}_1-\theta}$ is decreasing function of $\bar{\theta}_1$ with $\lim\limits_{\bar{\theta}_1\longrightarrow +\infty}g(\bar{\theta}_1)=0$ and $\lim\limits_{\bar{\theta}_1\longrightarrow \theta^{+}}g(\bar{\theta}_1)=+\infty$
Therefore, the consistency condition equation \eqref{nce1}: $f(\bar{\theta}_1)=g(\bar{\theta}_1)$ always admits one unique solution $\bar{\theta}_1$.\end{proof}
Moreover, we can also calculate $v(x)$ as follows:\begin{equation}\nonumber\begin{aligned}
v(x)&=\sup\limits_{\tau_i\in \mathcal{S}^{i}}\mathbb{E}\Bigg\{e^{-\beta\tau_i}\Big(x_i(\tau_i)-\bar{K}_1\Big)\Bigg\}
=\mathbb{E}\Bigg\{e^{-\beta\tau_i^{*}}\Big(x_i(\tau_i^{*})-\bar{K}_1\Big)\Bigg\}\\&
=\mathbb{E}\{e^{-\beta\tau_i^{*}}(x_i^{*}-\bar{K}_1)\}=(x_i^{*}-\bar{K}_1)\mathbb{E}\{e^{-\beta\tau_i^{*}}\}
\\&=\frac{K}{\bar{\theta}_1}\cdot\frac{1}{k_2-1}e^{\lambda_{+}b'}=\frac{K}{\bar{\theta}_1}\cdot\frac{1}{k_2-1}e^{\frac{1}{\sigma}\lambda_{+}\ln\left(\frac{\bar{K}_1}{x}\frac{k_2}{k_2-1}\right)}
\\&=\frac{1}{k_2}\left(\frac{K}{\bar{\theta}_1}\cdot \frac{k_2}{k_2-1}\right)^{1-k_2}x^{k_2}\\&=\frac{1}{k_2}\frac{1}{(x^{*})^{k_2-1}}x^{k_2}.
\end{aligned}\end{equation}In other words, $A=0, B=\frac{1}{k_2}\frac{1}{(x^{*})^{k_2-1}}$ in value function representation \eqref{valuefunction1}. This result coincide with that of \cite{p}, pp. 106 but note that here $x^{*}$ actually depends on $\bar{\theta}_1$ which is determined by the NCE equation \eqref{nce1}.

\subsection{Consistency condition for Problem (II)}

Now we discuss the consistency condition of Problem (II). Denote\begin{equation}\nonumber
\widetilde{\theta}_2=\widetilde{\theta}_2(N, \mathcal{T}, \{x_j\}_{j=1}^{N}):=\theta+\frac{1-\theta}{l_1+l_2\left(\frac{1}{N}\sum\limits_{j=1}^N\int_0^{\tau_j}e^{-\beta t}f(x_j(t))dt\right)}.
\end{equation}By law of large numbers, we have
\begin{equation}\nonumber
\lim_{N\rightarrow+\infty}\frac{1}{N}\sum\limits_{j=1}^N\int_0^{\tau_j}e^{-\beta t}f(x_j(t))dt=\mathbb{E}\int_0^{\tau}e^{-\beta t}f(x(t))dt
\end{equation}
where $\tau$ follows the same distribution with $\tau_j$, $1\leq j\leq N$ and $x(\cdot)$ satisfies
\begin{equation}\nonumber
dx(t)=\alpha x(t)dt+\sigma x(t)dW(t),\ \ x(0)=x.
\end{equation}
Then we introduce
\begin{equation}\label{E8}\begin{aligned}
\bar{\theta}_2:=\lim_{N\rightarrow+\infty}\widetilde{\theta}_2=\theta+\frac{1-\theta}{l_1+l_2\left(\mathbb{E}\int_0^{\tau}e^{-\beta t}f(x(t))dt\right)}.
\end{aligned}\end{equation}
Define $\bar{K}_2:=\frac{K}{\bar{\theta}_2}.$
Then we get the auxiliary mean-field optimal stopping functional as
\begin{equation}\label{E9}
J_i(\tau_i)=\mathbb{E}\Bigg\{\bar{\theta}_2\int_0^{\tau_i}e^{-\beta t}f(x_i(t))dt-e^{-\beta\tau_i}K\Bigg\}=\bar{\theta}_2\mathbb{E}\Bigg\{\int_0^{\tau_i}e^{-\beta t}f(x_i(t))dt-e^{-\beta\tau_i}\bar{K}_2\Bigg\}.
\end{equation}The maximization of $J_i(\tau_i)$ is equivalent to the maximization of $\mathbb{E}\Bigg\{\int_0^{\tau_i}e^{-\beta t}f(x_i(t))dt-e^{-\beta\tau_i}\bar{K}_2\Bigg\}$ as $\bar{\theta}_2>0.$
Now, we formulate the auxiliary limiting mean-field optimal stopping problem (for short, we denote it by (\textbf{LII})).\\

\textbf{Problem (LII)}
Find a stopping strategies set $\mathcal{T}=(\tau_1, \cdots, \tau_{N})$ to maximize
$J_i(\tau_i)$ for $1 \leq i \leq N$ where $\tau_i \in \mathcal{S}^{i}.$

Denote $$v(x):=\sup\limits_{\tau_i\in \mathcal{S}^{i}}\mathbb{E}\Bigg\{\int_0^{\tau_i}e^{-\beta t}f(x_i(t))dt-e^{-\beta\tau_i}\bar{K}_2\Bigg\}$$which satisfies
\begin{equation}\nonumber
\beta v(x)-\alpha x v'(x)-\frac{1}{2}\sigma^2x^2v''(x)-f(x)=0
\end{equation}
for $x>x^*.$ The continuation region is specified by
\begin{equation}\nonumber
\tau_i^{*}=\inf\{t:\ x_i(t)\leq x^*\}.
\end{equation}Define the resolvent operator$$\mathcal{R}_{\beta}f(x)=p(x):=\mathbb{E}\int_0^{+\infty}e^{-\beta t}f(x(t))dt.$$
We have the following representation$$v(x)=Ax^{k_1}+Bx^{k_2}+p(x)$$for $(k_1, k_2)$ are given by \eqref{k12}. Moreover, $B=0$ because the linear growth condition and the coefficient $A$ together with the cut-off level $x^{*}$ should be jointly determined by \begin{equation}\label{E12}\left\{\begin{aligned}
&A(x^*)^{k_1}+p(x^*)=-\bar{K}_2,\\
&k_1A(x^*)^{k_1-1}+p'(x^*)=0.
\end{aligned}\right.\end{equation}
By choosing $\tau_i^{*}$, we obtain
\begin{equation}\nonumber \begin{aligned}
v(x)&=Ax^{k_1}+p(x)=\mathbb{E}\int_0^{\tau_i^{*}}e^{-\beta t}f(x_i(t))dt-\bar{K}_2\mathbb{E}\Big(e^{-\beta\tau_i^{*}}\Big).
\end{aligned}\end{equation}Further
\begin{equation}\nonumber \begin{aligned}
\mathbb{E}\int_0^{\tau_i^{*}}e^{-\beta t}f(x_i(t))dt&=Ax^{k_1}+p(x)+\bar{K}_2\mathbb{E}\Big(e^{-\beta\tau_i^{*}}\Big).
\end{aligned}\end{equation}We have
\begin{equation}\label{E16}
\tau_i^{*}=\inf\{t:\ W_i(t)\geq a't+b'\}
\end{equation}where the cut-off condition is given by$$a'=\frac{\sigma}{2}-\frac{\alpha}{\sigma},\ \ b'=\frac{1}{\sigma}\ln\left(\frac{x^*}{x}\right).$$
Still, we have\begin{equation}\nonumber
\mathbb{E}\Big(e^{-\beta\tau_i^{*}}\Big)=e^{-\lambda b'}
\end{equation}for $\lambda a'-\frac{1}{2}\lambda^2=-\beta.$ Noting $x>x^*$, we have $b'=\frac{1}{\sigma}\ln\left(\frac{x^*}{x}\right)<0$. By $\beta>0,\tau\geq 0$, we just choose the negative root $$\lambda_-=a'-\sqrt{(a')^2+2\beta}.$$
Thus it follows $k_1=\frac{\lambda_-}{\sigma}<0$. We have\begin{equation}\nonumber
\mathbb{E}\Big(e^{-\beta\tau_i^{*}}\Big)=e^{-\frac{\lambda_- }{\sigma}\ln\left(\frac{x^*}{x}\right)}=e^{-k_1}\ln\left(\frac{x^*}{x}\right)=\left(\frac{x^*}{x}\right)^{-k_1}.
\end{equation}
Therefore,\begin{equation}\label{E19}
Ax^{k_1}+p(x)+\frac{K}{\bar{\theta}_2}\left(\frac{x^{*}}{x}\right)^{-k_1}=\frac{1}{l_2}\left[\frac{1-\theta}{\bar{\theta}_2-\theta}-l_1\right].
\end{equation}
In summary, \eqref{E12} and \eqref{E19} are called the NCE consistency condition to $(A, x^{*}, \bar{\theta}_2)$.
\begin{theorem}
The NCE consistency condition system
\begin{equation}\label{E20}\left\{
\begin{aligned}
&A(x^*)^{k_1}+p(x^*)=-\frac{K}{\bar{\theta}_2},\\
&k_1A(x^*)^{k_1-1}+p'(x^*)=0,\\
&Ax^{k_1}+p(x)+\frac{K}{\bar{\theta}_2}\left(\frac{x^{*}}{x}\right)^{-k_1}=\frac{1}{l_2}\left[\frac{1-\theta}{\bar{\theta}_2-\theta}-l_1\right]
\end{aligned}\right.\end{equation}admits one unique solution $(A, x^{*}, \bar{\theta}_2)$ where$$p(x):=\mathbb{E}\int_0^{+\infty}e^{-\beta t}f(x(t))dt$$
\end{theorem}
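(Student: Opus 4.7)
My approach is to reduce the three-variable system \eqref{E20} to a single scalar equation in $\bar{\theta}_2$ by exploiting the underlying free-boundary structure, and then close the argument with an intermediate-value/monotonicity argument of the same flavor as Proposition 3.1. To start, I would treat $\bar{\theta}_2 \in (\theta,+\infty)$ as a free parameter and recognize the first two equations of \eqref{E20} as the value-matching and smooth-pasting conditions for the standard optimal stopping problem with terminal cost $\bar{K}_2 = K/\bar{\theta}_2$. Standard free-boundary results (e.g.\ \cite{p}, \cite{s}) guarantee that for each such $\bar{\theta}_2$ this pair of equations admits a unique pair $(A(\bar{\theta}_2), x^*(\bar{\theta}_2))$; eliminating $A$ algebraically gives the implicit characterization
\begin{equation*}
\bar{\theta}_2\bigl[x^* p'(x^*) - k_1 p(x^*)\bigr] = k_1 K, \qquad A = -\frac{p'(x^*)(x^*)^{1-k_1}}{k_1}.
\end{equation*}

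Next I would substitute this parametrization into the third equation. Using the first equation to eliminate $A$, a short calculation gives
\begin{equation*}
A x^{k_1} + p(x) + \frac{K}{\bar{\theta}_2}\left(\frac{x^*}{x}\right)^{-k_1} = p(x) - p(x^*)\left(\frac{x}{x^*}\right)^{k_1},
\end{equation*}
so the third relation in \eqref{E20} reduces to the scalar consistency equation
\begin{equation*}
H(\bar{\theta}_2) := p(x) - p\bigl(x^*(\bar{\theta}_2)\bigr)\left(\frac{x}{x^*(\bar{\theta}_2)}\right)^{k_1} = \frac{1}{l_2}\left[\frac{1-\theta}{\bar{\theta}_2-\theta}-l_1\right] =: G(\bar{\theta}_2).
\end{equation*}
Exactly as in the proof of Proposition 3.1, the right-hand side $G$ is strictly decreasing on $(\theta,+\infty)$, with $G(\bar{\theta}_2)\to +\infty$ as $\bar{\theta}_2\to\theta^+$ and $G(\bar{\theta}_2)\to -l_1/l_2$ as $\bar{\theta}_2\to+\infty$.

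The remaining, most delicate, step is to show that $H$ is strictly monotone in the direction opposite to $G$ and that its range is large enough so that $H = G$ has exactly one solution in $(\theta,+\infty)$. For this I would differentiate the implicit relation $\bar{\theta}_2[x^* p'(x^*) - k_1 p(x^*)] = k_1 K$ with respect to $\bar{\theta}_2$ to establish the strict monotonicity of the free boundary $x^*(\bar{\theta}_2)$, and then exploit $k_1<0$ from \eqref{k12} together with the strict positivity and monotonicity of $p = \mathcal{R}_\beta f$ (inherited from $f>0$ nondecreasing Lipschitz, under the natural condition $\beta > \alpha$ guaranteeing $p$ is finite with at most linear growth) to deduce monotonicity and to pin down the limits of $H$ as $\bar{\theta}_2 \to \theta^+$ and $\bar{\theta}_2 \to +\infty$. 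The main obstacle is precisely this sign/monotonicity bookkeeping: in Problem (I) every quantity admitted a closed form so monotonicity was transparent, but here $p$ is only given implicitly as a resolvent, and one must propagate the regularity of $f$ through $p$ and $p'$ in order to control the composed map $\bar{\theta}_2 \mapsto H(\bar{\theta}_2)$ uniformly. Once $\bar{\theta}_2$ is thus uniquely determined by the intermediate value theorem, $x^*$ and $A$ are immediately recovered from the first two equations of \eqref{E20}, completing the proof.
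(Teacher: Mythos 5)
Your reduction of the three--unknown system to a single scalar equation is algebraically correct and is, in substance, the same manoeuvre the paper performs: the paper uses the first two equations to express $A=-\frac{p'(x^{*})}{k_1(x^{*})^{k_1-1}}$ and $\bar{\theta}_2$ in terms of $x^{*}$ and ends with one equation in $x^{*}$ alone, whereas you keep $\bar{\theta}_2$ as the surviving unknown. Your identity
\begin{equation*}
Ax^{k_1}+p(x)+\frac{K}{\bar{\theta}_2}\left(\frac{x^{*}}{x}\right)^{-k_1}=p(x)-p(x^{*})\left(\frac{x}{x^{*}}\right)^{k_1}
\end{equation*}
and the implicit relation $\bar{\theta}_2\bigl[x^{*}p'(x^{*})-k_1p(x^{*})\bigr]=k_1K$ both check out against the paper's displays (the paper's final formula for $\bar{\theta}_2$ in fact drops a minus sign relative to this).

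The genuine gap is that the step you yourself label ``most delicate'' --- strict monotonicity of $H$ opposite to $G$ and the matching of their ranges --- is never carried out, and that step is the \emph{entire} content of the claimed existence and uniqueness; everything before it only shows the system is equivalent to one scalar equation. To be fair, the paper's own proof stops at exactly the same point: it derives the scalar equation in $x^{*}$ and recovers $A$ and $\bar{\theta}_2$ by back-substitution without ever arguing that the scalar equation has a root, let alone a unique one, so your attempt is at the same level of rigor and is at least more candid about what is missing. But the sign bookkeeping you defer is not merely tedious; under the stated hypotheses it cannot be completed. With $f>0$ nondecreasing one has $p>0$ and $p'\geq 0$, hence $x^{*}p'(x^{*})-k_1p(x^{*})>0$ (recall $k_1<0$), and then $\bar{\theta}_2=k_1K/\bigl[x^{*}p'(x^{*})-k_1p(x^{*})\bigr]<0$, which is incompatible with $\bar{\theta}_2>\theta\geq 0$ required for $G$ to be defined on the relevant range. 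This reflects the underlying modeling fact that with a strictly positive running profit and a positive stopping cost the continuation region is all of $(0,\infty)$ and no finite free boundary $x^{*}$ exists; the intermediate-value argument can only be run after amending the hypotheses (e.g., allowing $f$ to be negative for small $x$, as in the standard natural-resources example). As written, your plan has the right shape but neither completes the proof nor can be completed without such a modification.
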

\begin{proof}From first equation of \eqref{E20}, we have $\bar{\theta}_2=-\frac{K}{A(x^{*})^{k_1}+p(x^{*})}$ thus
\begin{equation}\nonumber
Ax^{k_1}+p(x)-(A(x^{*})^{k_1}+p(x^{*}))\left(\frac{x^{*}}{x}\right)^{-k_1}=\frac{1}{l_2}\left[\frac{1-\theta}{-\left(\frac{K}{A(x^{*})^{k_1}+p(x^{*})}+\theta\right)}-l_1\right].
\end{equation}Also, from the second equation of \eqref{E20}, we have $A=-\frac{p'(x^{*})}{k_1(x^{*})^{k_1-1}}$ therefore
\begin{equation}\begin{aligned}\nonumber
&-\frac{p'(x^{*})}{k_1(x^{*})^{k_1-1}}x^{k_1}+p(x)-\left(-\frac{p'(x^{*})}{k_1(x^{*})^{k_1-1}}(x^{*})^{k_1}+p(x^{*})\right)\left(\frac{x^{*}}{x}\right)^{-k_1}\\
&=\frac{1}{l_2}\left[\frac{1-\theta}{-\left(\frac{K}{-\frac{p'(x^{*})}{k_1(x^{*})^{k_1-1}}(x^{*})^{k_1}+p(x^{*})}+\theta\right)}-l_1\right].
\end{aligned}\end{equation}Rearrange the above terms and noting that $x \neq 0,$ we have\begin{equation}\begin{aligned}\nonumber
&\frac{p(x)}{x^{k_1}}-\frac{p(x^{*})}{(x^{*})^{k_1}}=-\frac{1}{l_2}\frac{(1-\theta)}{\left(\frac{Kk_1}{k_1p(x^{*})-p'(x^{*})x^{*}}+\theta\right)}-\frac{l_1}{l_2}.
\end{aligned}\end{equation}Note that $x, l_1, l_2$ are known, thus we get the following equation of $x^{*}$ only:\begin{equation}\begin{aligned}
&-\left(\frac{p(x)}{x^{k_1}}+\frac{l_1}{l_2}\right)+\frac{p(x^{*})}{(x^{*})^{k_1}}=\frac{1}{l_2}\frac{(1-\theta)}{\left(\frac{Kk_1}{k_1p(x^{*})-p'(x^{*})x^{*}}+\theta\right)}.
\end{aligned}\end{equation}We have\begin{equation}\left\{\begin{aligned}
&A=-\frac{p'(x^{*})}{k_1(x^{*})^{k_1-1}},\\
&\bar{\theta}_2=-\frac{K}{A(x^{*})^{k_1}+p(x^{*})}=\frac{Kk_1}{k_1p(x^{*})-p'(x^{*})x^{*}}.\\
\end{aligned}\right.\end{equation}
\end{proof}

\section{Asymptotic analysis of $\epsilon-$Nash equilibrium}We discuss the asymptotic near-optimal property of Problem (I). To start, we first introduce the $\epsilon-$Nash equilibrium property to our mean-field optimal stopping problem. \begin{definition}A set of stopping strategies $\mathcal{T}=(\tau_1, \cdots, \tau_{N})$ for agents $\{\mathcal{A}_i\}_{1 \leq i \leq N},$ is called an $\epsilon-$Nash equilibrium with respect to the payoff functionals $\mathcal{J}_i, 1 \leq i \leq N,$ if there exists $\epsilon>0$ such that for any fixed $1 \leq i \leq N,$ it satisfies that \begin{equation} \mathcal{J}_{i}(\hat{\tau}_i, \tau_{-i})-\epsilon \leq \mathcal{J}_{i}(\tau_i, \tau_{-i})\end{equation}when any alternative optimal stopping $\hat{\tau}_i \in \mathcal{S}^{i}$ is applied by the agent $\mathcal{A}_i.$ \end{definition}Note that here $\mathcal{S}^{i}$ is the set of all stopping times of the filtration $\{\mathcal{F}^{i}_t\}_{t \geq 0}.$
\begin{lemma}For any $\tau \in \mathcal{S},$ we have $\mathbb{E}(e^{-\beta \tau} x_{\tau})^{2}<+\infty.$  \end{lemma}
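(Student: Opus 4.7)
The plan is to exploit the explicit lognormal representation of the geometric Brownian motion \eqref{1e1} and to isolate a Dol\'eans--Dade exponential that is manifestly a true $\mathbb{P}$-martingale. Writing $Y_t := e^{-\beta t} x_i(t)$ and solving \eqref{1e1} explicitly gives
\[
Y_t \;=\; x\exp\bigl((\alpha - \beta - \tfrac12\sigma^{2})t + \sigma W_i(t)\bigr).
\]
Squaring and completing the square on the Brownian exponent yields the decomposition
\[
Y_t^{2} \;=\; x^{2}\, e^{c t}\, M_t,\qquad c := 2(\alpha-\beta)+\sigma^{2},\qquad M_t := \exp\bigl(2\sigma W_i(t) - 2\sigma^{2} t\bigr),
\]
so that $M$ is the stochastic exponential of a deterministic integrand (Novikov's criterion is automatic on every finite horizon) and hence a positive $\mathbb{P}$-martingale, while all potentially explosive behaviour is isolated in the deterministic factor $e^{ct}$.

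Under the condition $\beta \ge \alpha + \tfrac12 \sigma^{2}$ one has $c\le 0$, and therefore pathwise $Y_\tau^{2}\le x^{2} M_\tau$. I would then localise by $\tau_n:=\tau\wedge n$ and apply optional stopping to the true martingale $M$, giving $\mathbb{E}[M_{\tau_n}]=1$. On $\{\tau<\infty\}$ we have $M_{\tau_n}\to M_\tau$ a.s.; on $\{\tau=\infty\}$ the strong law of large numbers for Brownian motion (combined with the strict negativity of the drift of $\log Y_t$) forces $Y_t\to 0$ a.s., so setting $M_\infty:=0$ is consistent with the bound. Fatou's lemma then yields $\mathbb{E}[M_\tau]\le \liminf_n \mathbb{E}[M_{\tau_n}]=1$, and combining with the pathwise inequality,
\[
\mathbb{E}\bigl[(e^{-\beta\tau} x_i(\tau))^{2}\bigr] \;\le\; x^{2}\,\mathbb{E}[M_\tau] \;\le\; x^{2} \;<\; +\infty,
\]
uniformly in $\tau\in\mathcal{S}$. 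As a streamlined alternative avoiding the factorisation altogether, I could apply It\^o's formula to obtain $d(Y_t^{2})=c Y_t^{2}\,dt + 2\sigma Y_t^{2}\,dW_i(t)$; when $c\le 0$ this exhibits $Y^{2}$ as a non-negative local supermartingale, so optional stopping at $\tau\wedge n$ plus Fatou gives the same bound $\mathbb{E}[Y_\tau^{2}]\le x^{2}$ directly.

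The principal obstacle is parametric rather than technical: the It\^o expansion above gives $\mathbb{E}[Y_t^{2}]=x^{2} e^{ct}$, so a finite second moment at each deterministic time is automatic, but a bound \emph{uniform over stopping times} genuinely requires $c\le 0$, i.e.\ $\beta\ge \alpha+\tfrac12\sigma^{2}$. Since the excerpt only assumed $\beta>\alpha$ (used to guarantee $k_2>1$ in \eqref{k12}), I would flag that the lemma tacitly strengthens this hypothesis; otherwise one can construct stopping times (for example, $\tau := \inf\{t\colon Y_t\ge R\}\wedge T$ with $R,T$ tuned to the blow-up rate of $\mathbb{E}[Y_t^{2}]$) driving $\mathbb{E}[Y_\tau^{2}]$ to infinity. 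Once this hypothesis is in force, the proof above produces not merely finiteness but the quantitative bound $\mathbb{E}[Y_\tau^{2}]\le x^{2}$, which will be convenient for the subsequent $\varepsilon$-Nash analysis.
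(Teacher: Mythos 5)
Your argument is correct on the regime it covers, and there it is essentially the paper's own first case: the paper likewise factors $\left(e^{-\beta\hat\tau_i}x_i(\hat\tau_i)\right)^2$ into the exponential martingale $e^{2\sigma W_i(\hat\tau_i)-2\sigma^2\hat\tau_i}$ times $e^{(\sigma^2+2(\alpha-\beta))\hat\tau_i}$ and discards the second factor when $\beta>\alpha+\frac{\sigma^2}{2}$; you are merely more careful in justifying $\mathbb{E}[M_\tau]\le 1$ via localization and Fatou rather than asserting equality to $1$ outright, which is the right level of care for unbounded stopping times. The substantive divergence is in the complementary regime $\alpha<\beta<\alpha+\frac{\sigma^2}{2}$, which the paper does attempt to cover: it extracts the martingale $e^{2\sigma W_i(\hat\tau_i)-4\sigma^2\hat\tau_i}$, applies Cauchy--Schwarz, and ends with the bound $\sqrt{\mathbb{E}\,e^{(4(\alpha-\beta)+6\sigma^2)\hat\tau_i}}$, declared finite without justification. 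In that regime the exponent satisfies $4(\alpha-\beta)+6\sigma^2>4\sigma^2>0$, and $\mathbb{E}\,e^{c\tau}$ with $c>0$ is not finite for general $\tau\in\mathcal{S}$ (for instance, level-hitting times of a driftless Brownian motion belong to $\mathcal{S}$ and have no finite exponential moments), so the paper's second case does not constitute a proof; your suspicion that the lemma tacitly requires $\beta\ge\alpha+\frac{\sigma^2}{2}$ is well founded. One correction to your proposed counterexample: with $\tau=\inf\{t: Y_t\ge R\}\wedge T$ one has $Y_\tau\le R$ pathwise, so each such $\tau$ still has a finite second moment; what this family actually shows, since $\mathbb{E}[Y_\tau^2]\ge R^2\,\mathbb{P}(\sup_{t\le T}Y_t\ge R)$ and $\mathbb{P}(\sup_{t}Y_t\ge R)\sim (R/x)^{-\kappa}$ with $\kappa=1+2(\beta-\alpha)/\sigma^2<2$, is that $\sup_{\tau\in\mathcal{S}}\mathbb{E}[Y_\tau^2]=+\infty$, i.e.\ the failure of any uniform bound. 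That is the damaging conclusion in any event, because the $\epsilon$-Nash theorem needs the $O(1/\sqrt N)$ estimate to hold uniformly over the deviating agent's alternative stopping times; your quantitative bound $\mathbb{E}[Y_\tau^2]\le x^2$, uniform in $\tau$, is exactly what that argument requires.
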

\begin{proof}In case $\beta>\alpha+\frac{\sigma^{2}}{2},$ we have\begin{equation}\nonumber\begin{aligned}
&\mathbb{E}(\hat{y}_i)^{2}=\mathbb{E}\left(e^{-\beta \hat{\tau}_i} x_i(\hat{\tau}_i)\right)^{2}=\mathbb{E}\left(e^{-2\beta \hat{\tau}_i} \cdot e^{2(\alpha-\frac{1}{2}\sigma^{2})\hat{\tau}_i+2\sigma W_{i}(\hat{\tau}_i)}\right)\\
&=\mathbb{E}\left(e^{2\sigma W_{i}(\hat{\tau}_i)-2\sigma^{2}\hat{\tau}_i} \cdot e^{\left(\sigma^{2}+2(\alpha-\beta)\right)\hat{\tau}_i}\right)\\
& \leq \mathbb{E}\left(e^{2\sigma W_{i}(\hat{\tau}_i)-2\sigma^{2}\hat{\tau}_i}\right)=1<+\infty.
\end{aligned}
\end{equation}In case $\alpha<\beta<\alpha+\frac{\sigma^{2}}{2},$ we have\begin{equation}\nonumber\begin{aligned}
&\mathbb{E}(\hat{y}_i)^{2}=\mathbb{E}\left(e^{-\beta \hat{\tau}_i} x_i(\hat{\tau}_i)\right)^{2}=\mathbb{E}\left(e^{-2\beta \hat{\tau}_i} \cdot e^{2(\alpha-\frac{1}{2}\sigma^{2})\hat{\tau}_i+2\sigma W_{i}(\hat{\tau}_i)}\right)\\
&=\mathbb{E}\left(e^{2\sigma W_{i}(\hat{\tau}_i)-4\sigma^{2}\hat{\tau}_i} \cdot e^{\left(4\sigma^{2}-2\beta+2(\alpha-\frac{1}{2}\sigma^{2})\right)\hat{\tau}_i}\right)\\
& \leq \sqrt{\mathbb{E}\left(e^{2\sigma W_{i}(\hat{\tau}_i)-4\sigma^{2}\hat{\tau}_i}\right)^{2}}\cdot \sqrt{\mathbb{E}e^{2\left(4\sigma^{2}-2\beta+2(\alpha-\frac{1}{2}\sigma^{2})\right)\hat{\tau}_i}}\\& \leq \sqrt{\mathbb{E}e^{\left(4(\alpha-\beta)+6\sigma^{2}\right)\hat{\tau}_i}}<+\infty.
\end{aligned}
\end{equation}\end{proof}
\begin{theorem}The optimal stopping strategy set $\mathcal{T}^{*}=(\tau_1^{*}, \cdots, \tau_{N}^{*})$ is an $\epsilon-$Nash equilibrium where $\tau_i^{*}=\inf\{t: x_i(t) \geq \bar{K}_1 \cdot \frac{k_2}{k_2-1}\}$ and $\bar{K}_1=\frac{K}{\bar{\theta}_1}$ is determined via the NCE equation \eqref{nce1}.
\end{theorem}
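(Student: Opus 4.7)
The plan is to establish the $\epsilon$-Nash property by a standard three-step perturbation comparison. Fix an agent $\mathcal{A}_i$ and suppose the remaining agents apply the proposed rules $\tau_{-i}^{*}$ while $\mathcal{A}_i$ considers an arbitrary alternative $\hat{\tau}_i \in \mathcal{S}^{i}$. I would (a) bound $|\mathcal{J}_i(\hat{\tau}_i,\tau_{-i}^{*}) - J_i(\hat{\tau}_i)|$ by a vanishing quantity $\epsilon(N)$, (b) invoke the optimality $J_i(\hat{\tau}_i) \le J_i(\tau_i^{*})$ for the limiting Problem (\textbf{LI}), and (c) apply the bound in (a) again at $\hat{\tau}_i = \tau_i^{*}$ to return to the finite-$N$ payoff. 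Chaining yields
\[
\mathcal{J}_i(\hat{\tau}_i, \tau_{-i}^{*}) \;\le\; J_i(\hat{\tau}_i) + \epsilon(N) \;\le\; J_i(\tau_i^{*}) + \epsilon(N) \;\le\; \mathcal{J}_i(\tau_i^{*}, \tau_{-i}^{*}) + 2\epsilon(N),
\]
which is precisely the $\epsilon$-Nash inequality with rate $O(\epsilon(N))$.

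The heart of the argument is step (a). Set $\bar{m} := \mathbb{E}\bigl(e^{-\beta\tau^{*}}x(\tau^{*})\bigr)$, which appears in \eqref{nce1} through $\bar{\theta}_1$, and
\[
M_N \;:=\; \frac{1}{N}\left(e^{-\beta\hat{\tau}_i}x_i(\hat{\tau}_i) + \sum_{j\neq i}e^{-\beta\tau_j^{*}}x_j(\tau_j^{*})\right).
\]
Since the $\tau_j^{*}$ are adapted to disjoint filtrations $\{\mathcal{F}^{j}_t\}$, the random variables $\{e^{-\beta\tau_j^{*}}x_j(\tau_j^{*})\}_{j \neq i}$ are i.i.d., and Lemma 4.1 together with the classical law of large numbers yields $\mathbb{E}|M_N - \bar{m}|^{2} = O(N^{-1})$, after absorbing an $O(N^{-1})$ residual from the single deviating index. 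Using the lower bound $l_1 + l_2 M_N \ge l_1 > 0$ and the elementary estimate $\bigl|\tfrac{1}{l_1+l_2 a}-\tfrac{1}{l_1+l_2 b}\bigr| \le \tfrac{l_2|a-b|}{l_1^{2}}$, followed by Cauchy--Schwarz on the difference of payoffs, I would obtain
\[
\bigl|\mathcal{J}_i(\hat{\tau}_i,\tau_{-i}^{*}) - J_i(\hat{\tau}_i)\bigr| \;\le\; \frac{(1-\theta)l_2}{l_1^{2}}\,\sqrt{\mathbb{E}\bigl(e^{-\beta\hat{\tau}_i}x_i(\hat{\tau}_i)\bigr)^{2}}\cdot\sqrt{\mathbb{E}|M_N-\bar{m}|^{2}} \;=\; O(N^{-1/2}).
\]

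The principal obstacle is securing the factor $\sqrt{\mathbb{E}(e^{-\beta\hat{\tau}_i}x_i(\hat{\tau}_i))^{2}}$ \emph{uniformly} over arbitrary $\hat{\tau}_i \in \mathcal{S}^{i}$. Lemma 4.1 resolves this cleanly in the regime $\beta > \alpha + \sigma^{2}/2$, where the exponential supermartingale $e^{2\sigma W_i(\hat{\tau}_i)-2\sigma^{2}\hat{\tau}_i}$ has mean at most $1$ for every stopping time. In the borderline regime $\alpha < \beta < \alpha + \sigma^{2}/2$ the bound in Lemma 4.1 is not a priori uniform, so I would either adopt the working assumption $\beta > \alpha + \sigma^{2}/2$ or restrict attention to deviations satisfying $\mathcal{J}_i(\hat{\tau}_i,\tau_{-i}^{*}) \ge \mathcal{J}_i(\tau_i^{*},\tau_{-i}^{*})$, as the reverse case renders the $\epsilon$-Nash inequality trivial; this restriction, combined with the explicit form of $\mathcal{J}_i$ and the strict positivity of $\bar{\theta}_1$, forces an a priori second-moment bound on $e^{-\beta\hat{\tau}_i}x_i(\hat{\tau}_i)$. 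With such uniform $L^{2}$ control, the chain of inequalities above closes with $\epsilon(N) = O(N^{-1/2})$, completing the verification.
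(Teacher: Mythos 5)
Your proposal follows essentially the same route as the paper: bound $|\mathcal{J}_i(\cdot,\tau_{-i}^{*})-J_i(\cdot)|$ both at $\tau_i^{*}$ and at an arbitrary deviation $\hat{\tau}_i$ via the law of large numbers, the Lipschitz estimate on $a\mapsto 1/(l_1+l_2a)$ and Cauchy--Schwarz, then chain through the optimality of $\tau_i^{*}$ for the limiting Problem (\textbf{LI}) to get the $O(N^{-1/2})$ rate. Your explicit three-inequality chaining (which the paper compresses into ``hence the result'') and your observation that the second-moment bound of Lemma 4.1 is only cleanly uniform over arbitrary $\hat{\tau}_i$ in the regime $\beta>\alpha+\sigma^{2}/2$ are slightly more careful than the paper's presentation, but the substance of the argument is identical.
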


\begin{proof}For sake of presentation, we introduce the following notations:\begin{equation}\left\{
\begin{aligned}\nonumber
&y_i(\tau_i):=e^{-\beta\tau_i}x_i(\tau_i), \quad y_{-i}(\tau_{-i}):=\frac{1}{N}\sum\limits_{j=1,j\neq i}^Ne^{-\beta\tau_j}x_j(\tau_j),\\
&y_i^{*}(\tau_i^{*}):=e^{-\beta\tau_i^{*}}x_i(\tau_i^{*}), \quad y_{-i}^{*}(\tau_{-i}^{*}):=\frac{1}{N}\sum\limits_{j=1,j\neq i}^Ne^{-\beta\tau_j^{*}}x_j(\tau_j^{*}).
\end{aligned}\right.\end{equation}When all agents apply the optimal stopping strategies $\mathcal{T}^{*}=\{\tau_1^{*}, \cdots, \tau_N^{*}\},$ the payoff functional in Problem (\textbf{I}) becomes

\begin{equation}\nonumber\begin{aligned}
&\mathcal{J}_i(\tau_i^{*},\tau_{-i}^{*})\\
=&\mathbb{E}\left\{\Bigg[\theta e^{-\beta\tau_i^{*}}x_i(\tau_i^{*})+(1-\theta)\frac{e^{-\beta\tau_i^{*}}x_i(\tau_i^{*})}
{l_1+l_2\left(\frac{1}{N}\sum\limits_{j=1}^Ne^{-\beta\tau_j^{*}}x_j(\tau_j^{*})\right)}\Bigg]-e^{-\beta\tau_i^{*}}K\right\}\\
=&E\left\{e^{-\beta\tau_i^{*}}x_i(\tau_i^{*})\widetilde{\theta}_1-e^{-\beta\tau_i^{*}}K\right\}
\end{aligned}
\end{equation}where\begin{equation}\begin{aligned}&\widetilde{\theta}_1:=\theta+\frac{(1-\theta)}{l_1+l_2\left(\frac{1}{N}\sum\limits_{j=1}^Ne^{-\beta\tau_j^{*}}x_j(\tau_j^{*})\right)}\\&=\theta +\frac{(1-\theta)}{l_1+\frac{l_2}{N}e^{-\beta\tau_i^{*}}x_i(\tau_i^{*})+\frac{l_2}{N}\sum\limits_{j=1,j\neq i}^Ne^{-\beta\tau_j^{*}}x_j(\tau_j^{*})}\end{aligned}\end{equation}Therefore, we have\begin{equation}\widetilde{\theta}_1=\widetilde{\theta}_1(y_i^{*}, y_{-i}^{*})=\theta+\frac{(1-\theta)}{l_1+\frac{l_2y_i^{*}}{N}+l_2y_{-i}^{*}}\end{equation}Therefore,\begin{equation}
\mathcal{J}_i(\tau_i^{*},\tau_{-i}^{*})=\mathbb{E}\left\{y_i^{*}\widetilde{\theta}_1(y_i^{*}, y_{-i}^{*})-e^{-\beta\tau_i^{*}}K\right\}.
\end{equation}On the other hand, the limiting gain functional\begin{equation}J_{i}(\tau_i^{*})=\mathbb{E}\{\bar{\theta}_1y_i^{*}-e^{-\beta \tau_i^{*}}K\}\end{equation}where
$\bar{\theta}_1:=\theta+\frac{1-\theta}{l_1+l_2\mathbb{E}\Big(e^{-\beta\tau}x(\tau)\Big)}$ is determined through the consistency condition \eqref{nce1}. When all the agents apply the decentralized optimal stopping rules $\mathcal{T}^{*},$ applying Cauchy-Schwarz inequality:\begin{equation}\begin{aligned}&|\mathcal{J}_i(\tau_i^{*}, \tau_{-i}^{*})-J_{i}(\tau_i^{*})|=|\mathbb{E}\left[y_i^{*}\left(\widetilde{\theta}_1(y_i^{*}, y_{-i}^{*})-\bar{\theta}_1\right)\right]|\\&=(1-\theta)\mathbb{E}\left[y_i^{*}\left(\frac{1}{l_1+
l_2\frac{y_i^{*}}{N}+l_2y_{-i}^{*}}-\frac{1}{l_1+l_2\mathbb{E}\Big(e^{-\beta\tau}x(\tau)\Big)}\right)\right]\\& \leq (1-\theta)\left(\mathbb{E}(y_i^{*})^{2}\right)^{\frac{1}{2}}\cdot \left(\mathbb{E}\frac{(
l_2\frac{y_i^{*}}{N}+l_2\left(y_{-i}^{*}-\mathbb{E}(e^{-\beta\tau}x(\tau))\right)^{2}}{(l_1+
l_2\frac{y_i^{*}}{N}+l_2y_{-i}^{*})^{2}\big(l_1+l_2\mathbb{E}(e^{-\beta\tau}x(\tau))\big)^{2}}\right)^{\frac{1}{2}}\\
&\leq \frac{2(1-\theta)}{l_1^{2}}\left(\mathbb{E}(y_i^{*})^{2}\right)^{\frac{1}{2}}\cdot \left(\frac{l_2^{2}}{N^{2}}\mathbb{E}(y_i^{*})^{2}+l_2^{2}\mathbb{E}(y_{-i}^{*}-\mathbb{E}(e^{-\beta\tau}x(\tau)))^{2}\right)^{\frac{1}{2}}
\\&=O\left(\frac{1}{\sqrt{N}}\right)\end{aligned}\end{equation}Here, note that\begin{equation}\begin{aligned}&\mathbb{E}(y_i^{*})^{2}=\mathbb{E}(e^{-\beta\tau^{*}_i}x_i(\tau^{*}_i))^{2}
=(x^{*})^{2}\mathbb{E}(e^{-2\beta\tau^{*}_i})=(x^{*})^{2}e^{-\left(a'+\sqrt{(a')^{2}+4\beta}\right)b'}\\
&=(x^{*})^{2}e^{-\left(a'+\sqrt{(a')^{2}+4\beta}\right)\frac{1}{\sigma}\ln
\left(\frac{K}{\bar{\theta}_1}\frac{1}{x}\frac{k_2}{k_2-1}\right)}
\\&=\left(\frac{K}{\bar{\theta}_1}\frac{1}{x}\frac{k_2}{k_2-1}\right)^{-\frac{a'+\sqrt{(a')^{2}+4\beta}}{\sigma}}<+\infty.\end{aligned}\end{equation}When any alternative optimal stopping $\hat{\tau}_i \in \mathcal{S}$ is applied by $\mathcal{A}_i,$ then\begin{equation}\nonumber\begin{aligned}
&\mathcal{J}_i(\hat{\tau}_i,\tau_{-i}^{*})\\
=&\mathbb{E}\left\{\Bigg[\theta e^{-\beta\hat{\tau}_i}x_i(\hat{\tau}_i)+(1-\theta)\frac{e^{-\beta\hat{\tau}_i}x_i(\hat{\tau}_i)}
{l_1+l_2\left(\frac{1}{N}e^{-\beta\hat{\tau}_i}x_i(\hat{\tau}_i)+\frac{1}{N}\sum\limits_{j=1, j\neq i}^Ne^{-\beta\tau_j^{*}}x_j(\tau_j^{*})\right)}\Bigg]-e^{-\beta\hat{\tau}_i}K\right\}\\
=&E\left\{e^{-\beta\hat{\tau}_i}x_i(\hat{\tau}_i)\widetilde{\theta}-e^{-\beta\hat{\tau}_i}K\right\}.
\end{aligned}
\end{equation}Here, \begin{equation}\widetilde{\theta}(\hat{y}_i, y^{*}_{-i})=\theta +\frac{(1-\theta)}{l_1+\frac{l_2}{N}e^{-\beta\hat{\tau}_i}x_i(\hat{\tau}_i)+\frac{l_2}{N}\sum\limits_{j=1,j\neq i}^Ne^{-\beta\tau_j^{*}}x_j(\tau_j^{*})}.\end{equation}On the other hand, \begin{equation}J_{i}(\hat{\tau}_i)=\mathbb{E}\{\bar{\theta}_1\hat{y}_i-e^{-\beta \hat{\tau}_i}K\}.\end{equation}Applying Cauchy-Schwarz inequality, we have
\begin{equation}\nonumber\begin{aligned}
&|\mathcal{J}_i(\hat{\tau}_i, \tau_{-i}^{*})-J_{i}(\hat{\tau}_i)|=|\mathbb{E}\left[\hat{y}_i\left(\widetilde{\theta}(\hat{y}_i, y_{-i}^{*})-\bar{\theta}\right)\right]|\\
=&(1-\theta)\mathbb{E}\left[\hat{y}_i\left(\frac{1}{l_1+
l_2\frac{\hat{y}_i}{N}+l_2y_{-i}^{*}}-\frac{1}{l_1+l_2\mathbb{E}\Big(e^{-\beta\tau}x(\tau)\Big)}\right)\right]\\
\leq &(1-\theta)\left(\mathbb{E}(\hat{y}_i)^{2}\right)^{\frac{1}{2}}\cdot \mathbb{E}\left(\frac{l_2\frac{\hat{y}_i}{N}+l_2\big(y_{-i}^{*}-\mathbb{E}(e^{-\beta\tau}x(\tau))\Big)}{(l_1+
l_2\frac{\hat{y}_i}{N}+l_2y_{-i}^{*})(l_1+l_2\mathbb{E}\Big(e^{-\beta\tau}x(\tau)\Big))}^{2}\right]^{\frac{1}{2}}\\\leq & \frac{2(1-\theta)}{l_1^{2}}\left(\mathbb{E}(\hat{y}_i)^{2}\right)^{\frac{1}{2}}\cdot \left(\frac{l_2^{2}}{N^{2}}\mathbb{E}(\hat{y}_i)^{2}+l_2^{2}\mathbb{E}(y_{-i}^{*}-\mathbb{E}(e^{-\beta\tau}x(\tau)))^{2}\right)^{\frac{1}{2}}
\\=&O\left(\frac{1}{\sqrt{N}}\right)
\end{aligned}
\end{equation}Note that by Lemma 4.1, we have $\mathbb{E}(\hat{y}_i)^{2}<+\infty.$ Hence he result.
\end{proof}

\section{Inverse Mean Field Optimal Stopping Problem}
In this section, we turn to study the \emph{inverse} mean-field optimal stopping problem of large population system. Note that the inverse stopping problem in non-large-population setup is already well addressed (\cite{Kruse}) but to our best knowledge, no similar work was carried out in its large population setup.
Its main idea can be sketched as follows. We first introduce a market manager, denoted by $\mathcal{A}_0$, which can be interpreted as the supervisory authority, market regulator, or local government (say, the tax or revenue bureau). Unlike the individual negligible agents $\{\mathcal{A}_i\}_{i=1}^{N}$, the manager $\mathcal{A}_0$ has the right to construct or design the gain functional in our optimal stopping problem. One real example is the transaction fee $K$ introduced in \eqref{1e5}, which should be charged by the market organizer or local government (that is $\mathcal{A}_0$). Also, to great extent, the level of transaction fee should be designed or settled down subject to the discretion of $\mathcal{A}_0$. Given the transaction fee $K$ assigned by $\mathcal{A}_0,$ the small agents $\{\mathcal{A}_i\}_{i=1}^{N}$ in large population system naturally give their best stopping response $\{\tau_i^{*}\}_{i=1}^{N}$ to own individual optimal stopping problems, as we discussed in Section II and III. As a sequel, it generates an empirical distribution function
${F}_{N}(t):=\frac{1}{N}\sum_{i=1}^{N}1_{[0, t]}(\tau_i^{*})$ by all stopping times adopted by $\{\mathcal{A}_i\}_{i=1}^{N}$. By the Glivenko-Cantelli theorem, we have (see \cite{Vaart})$$||{F}_{N}-F||_{\infty}=\sup_{0 \leq t \leq +\infty}|{F}_{N}(t)-F(t)|\longrightarrow  0 \quad a.s. \quad N \longrightarrow +\infty$$where $F$ is the distribution function of $\tau_i^{*}, 1 \leq i \leq N.$ Recall the cutoff level of individual optimal stopping $\tau_i^{*}$ depends on $K$ thus $F_{N}, F$ are actually functions of $K$.

As the market organizer or regulator, $\mathcal{A}_0$ does not concern the individual stopping time applied by a particular agent. Instead, $\mathcal{A}_0$ is more interested to carefully design the transaction level $K$ such that the resulting empirical distribution $F_{N}$ or $F$ from all stopping times $\{\tau_i^{*}\}_{i=1}^{N}$ should possess some preferred statistical properties. In other words, the market manager is more concerned to the group stopping behavior by the large population system instead a given individual agent. For example, in some cases, the market organizer prefers the agents $\{\mathcal{A}_i\}_{i=1}^{N}$ will not close their business (or, sell their assets) all together in the same time thus it hopes to maximize the variance of empirical process by all stopping times. In other words, to maximize the empirical variance related to the distribution function $F_{N}$, or the variance to $F$ when considering the asymptotic property by $N\longrightarrow +\infty.$ To gain more insights, let us focus on the framework of \emph{best time to sell}, as addressed in Section (II). It can also be understood by the \emph{best time to close business}. The values of individual assets or business firms are given by\begin{equation}\nonumber \left\{
\begin{aligned}
&dx_i(t)=\alpha x_i(t)dt+\sigma x_i(t)dW_i(t),\\
&x_i(0)=x.
\end{aligned}\right.\end{equation}The individual gain or payoff functionals are:\begin{equation}\label{5e1}
\mathcal{J}_i(\tau_i,\tau_{-i})=\mathbb{E}\left\{\frac{ e^{-\beta\tau_i}x_i(\tau_i)}{\frac{1}{N}\sum\limits_{j=1, j \neq i}^Ne^{-\beta\tau_j}x_j(\tau_j)}- e^{-\beta\tau_i}K\right\}.
\end{equation}\begin{remark}The functional \eqref{5e1} can be viewed as a special case of the payoff functional \eqref{1e5} given in Problem (I) by setting $\theta=0, l_1=0, l_2=1.$ Moreover, in \eqref{5e1}, the state-average excludes the individual state of $\mathcal{A}_i.$ Similar functional but arising in linear-quadratic mean-field game can be found in \cite{hcm07}. We focus on the functional \eqref{5e1} here mainly because we aim to get more explicit result. It is remarkable that our analysis below can also be extended to more general functional but without explicit solutions.\end{remark}

Based on \eqref{nce1}, the consistent condition in present case \eqref{5e1} takes more explicit representation:$$\bar{\theta}_1=x^{-1}\Big(\frac{Kk_2}{k_2-1}\Big)^{1-\frac{\sigma}{\lambda_+}}=x^{-1}\Big(\frac{Kk_2}{k_2-1}\Big)^{1-\frac{1}{k_2}}.$$
For sake of presentation, we repeat the individual optimal stopping rules as follows: $\tau_i^{*} \sim \tau$ where\begin{equation}\label{e13} \left\{
\begin{aligned}
&\tau=\inf\{t:\ W(t)\geq a't+b'\},\\
&a'=\frac{\sigma}{2}-\frac{\alpha}{\sigma}, \quad b'=\frac{1}{\sigma}\ln\left(\frac{K}{\bar{\theta}_1x}\cdot\frac{k_2}{k_2-1}\right)=\frac{1}{\sigma}\ln\left(\frac{Kk_2}{k_2-1}\right)^{\frac{1}{k_2}}>0.
\end{aligned}\right.\end{equation}Note that $k_2>1$ thus $b'>0$ implies $\frac{Kk_2}{k_2-1}>1.$ That is, $K>\frac{k_2-1}{k_2}.$ Moreover, we have$$\mathbb{E}\Big(e^{(\lambda a'-\frac{1}{2}\lambda^2)\tau}\Big)=e^{-\lambda b'}.$$
Set $\lambda a'-\frac{1}{2}\lambda^2=t$, and we have\begin{equation}\label{e15}
M_{\tau}(t):=\mathbb{E}\Big(e^{t\tau}\Big)=e^{-\lambda b'}=e^{-\frac{\lambda}{\sigma}\ln\left(\frac{K k_2}{k_2-1}\right)^{\frac{1}{k_2}}}
=\left(\frac{Kk_2}{k_2-1}\right)^{-\frac{\lambda_+(t)}{k_2\sigma}}
\end{equation}where\begin{equation}\nonumber\left\{
\begin{aligned}
& \lambda_{+}(t)=\frac{\sigma}{2}-\frac{\alpha}{\sigma}+\sqrt{\Big(\frac{\sigma}{2}-\frac{\alpha}{\sigma}\Big)^2-2t},\\
&\lambda_{-}(t)=\frac{\sigma}{2}-\frac{\alpha}{\sigma}-\sqrt{\Big(\frac{\sigma}{2}-\frac{\alpha}{\sigma}\Big)^2-2t}.
\end{aligned}\right.\end{equation}Further, we have the following basic computations:
$$M_{\tau}'(t)=M_{\tau}(t)\cdot \ln \left(\frac{Kk_2}{k_2-1}\right)\cdot \frac{1}{k_2\sigma\sqrt{\Big(\frac{\sigma}{2}-\frac{\alpha}{\sigma}\Big)^2-2t}}.$$
It follows that\begin{equation}\nonumber\mathbb{E}(\tau)=M_{\tau}'(0)=\left(-\frac{1}{a'k_2\sigma}\right)\ln \left(\frac{Kk_2}{k_2-1}\right)\end{equation}which is an increasing function for $K > \frac{k_2-1}{k_2}.$ In fact, if $K=\frac{k_2-1}{k_2},$ then $x=x^{*},$ and the agents should sell their assets or close their business immediately at $t=0.$ Further, we have the following computations.\begin{equation}\nonumber\begin{aligned}
M_{\tau}''(t)&=M_{\tau}(t)\left[\ln \left(\frac{Kk_2}{k_2-1}\right)\cdot\frac{1}{k_2\sigma}\cdot \frac{1}{\sqrt{(a')^{2}-2t}}\right]^{2}\\
&+M_{\tau}(t)\left[\ln \left(\frac{Kk_2}{k_2-1}\right)\cdot\frac{1}{k_2\sigma}\cdot \frac{1}{(\sqrt{(a')^{2}-2t})^{3}}\right].
\end{aligned}
\end{equation}Therefore,\begin{equation}\nonumber\begin{aligned}
&\mathbb{E}\tau^{2}=M_{\tau}''(0)=\left[\ln \left(\frac{Kk_2}{k_2-1}\right)\cdot \left(\frac{1}{k_2\sigma}\right)\cdot \frac{1}{-a'}\right]^{2}-\ln \left(\frac{Kk_2}{k_2-1}\right)\cdot \left(\frac{1}{k_2\sigma}\right)\cdot \frac{1}{(a')^3}\\
\Longrightarrow &\text{Var}(\tau)=\mathbb{E}\tau^{2}-(\mathbb{E}\tau)^{2}=-\ln \left(\frac{Kk_2}{k_2-1}\right)\cdot \left(\frac{1}{k_2\sigma(a')^3}\right)\end{aligned}\end{equation}Actually, $\tau$ should follow the inverse Gaussian distribution $IG(\mu, \varrho)$ where\begin{equation}\nonumber\left\{
\begin{aligned}
& \mu=\left(-\frac{1}{a'k_2\sigma}\right)\ln \left(\frac{Kk_2}{k_2-1}\right)>0,\\
&\varrho=\left[\frac{1}{k_2\sigma}\ln \frac{Kk_2}{k_2-1}\right]^{2}>0.
\end{aligned}\right.\end{equation}It follows that the expectation and variance are both increasing functions of $K.$

\subsection{Target expectation or variance}

We first present a rather simple case which we called ``\emph{target expectation or variance}." In this case, the market manager $\mathcal{A}_0$ aims to push or steer
the empirical distribution $F_{N}$, or its limit $F$ to reach some given level in expectation or variance. Its economic interpretation is as follows: in some case, the market manager hopes the given small agents or firms will close their business with some prescribed average time, or some given variance. The target variance implies these small firms or agents in given industry will not close their business or sell their assets in too concentrated way (i.e. small variance) or dispersive way (i.e. large variance). To this problem, we have the following result.

\begin{proposition}To have $\mathbb{E}\tau=\int_{t=0}^{+\infty} t dF(t)=\mu_0>0,$ the transaction fee should be set by $K_{\mu_{0}}=\left(\frac{k_2-1}{k_2}\right)e^{-a'\mu_0k_2\sigma}$.\end{proposition}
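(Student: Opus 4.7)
The plan is essentially to invert the expectation formula that has already been established via the moment generating function computation just above the statement. From the displayed identity
$$
\mathbb{E}(\tau) = M_{\tau}'(0) = \Big(-\tfrac{1}{a' k_2 \sigma}\Big)\ln\Big(\tfrac{K k_2}{k_2-1}\Big),
$$
the unknown quantity we need to determine is $K$, and everything else on the right-hand side is a fixed model parameter. So the proof reduces to solving a one-variable equation, and I would simply algebraically invert.

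First, I would set $\mathbb{E}(\tau)=\mu_0$ in the above identity, yielding
$$
\ln\Big(\tfrac{K k_2}{k_2-1}\Big) = -a' \mu_0 k_2 \sigma.
$$
Exponentiating both sides gives $\tfrac{K k_2}{k_2-1} = e^{-a' \mu_0 k_2 \sigma}$, and solving for $K$ yields exactly the claimed $K_{\mu_0} = \big(\tfrac{k_2-1}{k_2}\big) e^{-a' \mu_0 k_2 \sigma}$. For this to be a bona fide solution to the underlying inverse problem, I would briefly check that it lies in the admissible range $K > \tfrac{k_2-1}{k_2}$ required for $b'>0$ (so that the initial wealth is in the continuation region and $\tau$ is nontrivial); this amounts to checking $e^{-a' \mu_0 k_2 \sigma}>1$, which holds whenever $a'<0$, i.e., $\alpha>\sigma^2/2$, a standing condition in the drift-dominated regime of the \emph{best time to sell} problem. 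Uniqueness is immediate because $K\mapsto \mathbb{E}(\tau)$ is strictly monotone in $K$ on the admissible range (as already noted following the derivation of $M_\tau'(0)$).

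There is no real obstacle here; the only thing to watch out for is consistency with the implicit constraint $K>(k_2-1)/k_2$ and the sign of $a'$, which determines whether arbitrary target means $\mu_0>0$ are attainable. In the write-up I would therefore spend most of the space stating this monotone-inversion argument cleanly and flagging the parameter regime under which $K_{\mu_0}$ is admissible, rather than on any nontrivial computation.
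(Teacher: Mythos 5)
Your proposal is correct and follows essentially the same route as the paper: the paper simply inverts the identity $\mathbb{E}(\tau)=M_{\tau}'(0)=\left(-\frac{1}{a'k_2\sigma}\right)\ln\left(\frac{Kk_2}{k_2-1}\right)$ derived just before the proposition, and your admissibility check $K_{\mu_0}>\frac{k_2-1}{k_2}$ under $a'<0$ matches the paper's own Remark following the statement.
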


\begin{proposition}To have $ \text{Var}(\tau)=\kappa_0>0,$ the transaction fee should be set by $K_{\kappa_{0}}=\left(\frac{k_2-1}{k_2}\right)e^{-(a')^{3}\kappa_0k_2\sigma}$.\end{proposition}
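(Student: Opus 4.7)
The plan is to invert the explicit variance formula for the stopping time $\tau$ that was derived just before the two propositions, and read off the value of the transaction fee $K$ that produces the target variance $\kappa_0$. All the heavy lifting has already been done: the moment generating function $M_\tau(t)$ of the first-passage time was computed from \eqref{e15} via the optional stopping identity, and its second derivative at $0$ yielded
\begin{equation*}
\mathrm{Var}(\tau) \;=\; -\ln\!\left(\frac{Kk_2}{k_2-1}\right)\cdot\frac{1}{k_2\sigma (a')^3}.
\end{equation*}
So I would first quote this identity, then impose $\mathrm{Var}(\tau)=\kappa_0$ and solve algebraically for $K$.

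Setting $-\ln\!\big(\tfrac{Kk_2}{k_2-1}\big)\cdot\tfrac{1}{k_2\sigma (a')^3}=\kappa_0$ and rearranging gives $\ln\!\big(\tfrac{Kk_2}{k_2-1}\big) = -(a')^3\kappa_0 k_2\sigma$, whence
\begin{equation*}
K_{\kappa_0} \;=\; \frac{k_2-1}{k_2}\,\exp\!\bigl(-(a')^3\kappa_0 k_2\sigma\bigr),
\end{equation*}
which is the asserted formula. Uniqueness of this choice follows because $K\mapsto \ln\!\big(\tfrac{Kk_2}{k_2-1}\big)$ is strictly monotone on the admissible range $K>\tfrac{k_2-1}{k_2}$, so the map $K\mapsto \mathrm{Var}(\tau)$ is injective.

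The only subtlety worth mentioning is a sign/consistency check: for $\mathrm{Var}(\tau)>0$ we need the product $-\ln\!\big(\tfrac{Kk_2}{k_2-1}\big)/(a')^3$ to be positive, which holds precisely when $a'<0$ (recall $a'=\tfrac{\sigma}{2}-\tfrac{\alpha}{\sigma}$, and this is the regime compatible with $b'>0$ and $K>\tfrac{k_2-1}{k_2}$ already imposed in the derivation of \eqref{e13}). Under this regime, $-(a')^3\kappa_0 k_2\sigma>0$, so $K_{\kappa_0}>\tfrac{k_2-1}{k_2}$, meaning the prescribed transaction fee is admissible and the agents do not liquidate instantly. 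I would close by noting that there is no real obstacle here beyond the bookkeeping of signs; the content of the proposition is an explicit inversion of the closed-form variance of the inverse Gaussian law $IG(\mu,\varrho)$ that was identified for $\tau$ in the paragraph immediately preceding the two propositions.
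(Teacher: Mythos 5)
Your proposal is correct and follows exactly the route the paper intends: the proposition is stated without a separate proof precisely because it is an immediate inversion of the formula $\mathrm{Var}(\tau)=-\ln\bigl(\tfrac{Kk_2}{k_2-1}\bigr)\cdot\tfrac{1}{k_2\sigma (a')^3}$ derived from $M_\tau''(0)-\bigl(M_\tau'(0)\bigr)^2$ just before it. Your added sign check ($a'<0$ so that $K_{\kappa_0}>\tfrac{k_2-1}{k_2}$) matches the remark the authors give immediately after the two propositions, so nothing is missing.
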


\begin{remark}(i) Note that the higher targeted expectation level $\mu_0$ or variance level $\kappa_0,$ the higher transaction level $K$ should be set.
(ii) Recall $a'<0, k_2>1$ thus $K_{\mu_{0}}, K_{\kappa_{0}}>\frac{k_2-1}{k_2}$ which is consistent with our previous result. \end{remark}

\subsection{Minimization of $L^{2}-$deviation of target time location}

Now we consider the case where the market manager $\mathcal{A}_0$ encourages the small agents to close their business (or sell their assets) at some pre-specified timing point $t_0.$ This corresponds to the situation when some local government plans to upgrading some declining industry in given future time in a very quick way. The empirical $L^{2}-$deviation with large population size $N$ is given by $\frac{1}{N}\sum_{i=1}^{N}(\tau_i^{*}-t_0)^{2}.$ Motivated by this situation, we consider the following $L^{2}-$deviation minimization problem:$$\min_{K}\mathbb{E}(\tau-t_0)^{2}.$$The following result is very straightforward.\begin{proposition}
The optimal transaction fee of the above minimization problem is given by$$\arg\min_{K}\mathbb{E}(\tau-t_0)^{2}=\begin{cases} \frac{k_2-1}{k_2}\exp\left(\frac{k_2\sigma}{2}\left(\frac{1}{a'}-2t_0a'\right)\right), \quad \quad t_0>\frac{1}{2 (a')^{2}};\\ \frac{k_2-1}{k_2}, \quad \quad t_0\leq \frac{1}{2 (a')^{2}}.\end{cases}$$
\end{proposition}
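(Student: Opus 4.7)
The plan is to use the bias–variance decomposition $\mathbb{E}(\tau-t_0)^2=\mathrm{Var}(\tau)+(\mathbb{E}\tau-t_0)^2$ together with the closed-form expressions for $\mathbb{E}\tau$ and $\mathrm{Var}(\tau)$ already derived from the moment generating function $M_\tau(t)$ just above the proposition. Setting the change of variable $L:=\ln\bigl(\tfrac{Kk_2}{k_2-1}\bigr)$, the feasible range becomes $L\ge 0$ (which corresponds to $K\ge\tfrac{k_2-1}{k_2}$, i.e. $x\le x^*$, the condition under which the stopping rule is nontrivial). Writing $c_1:=-\tfrac{1}{a'k_2\sigma}>0$ and $c_2:=-\tfrac{1}{k_2\sigma(a')^3}>0$, we have $\mathbb{E}\tau=c_1 L$ and $\mathrm{Var}(\tau)=c_2 L$, so the objective becomes
\[
\Phi(L):=c_1^2 L^2-(2c_1 t_0-c_2)L+t_0^2,
\]
a convex quadratic in $L$.

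Minimizing $\Phi$ on $\mathbb{R}$ gives the unique critical point $L^\star=\tfrac{2c_1t_0-c_2}{2c_1^2}$, and a short simplification using $1/c_1=-a'k_2\sigma$ and $c_2/c_1^2=-\tfrac{k_2\sigma}{a'}$ rearranges this to $L^\star=\tfrac{k_2\sigma}{2}\bigl(\tfrac{1}{a'}-2t_0 a'\bigr)$. Inverting the change of variable then yields the candidate $K^\star=\tfrac{k_2-1}{k_2}\exp(L^\star)$, which already has the shape claimed in the first branch of the proposition. The remaining issue is admissibility: I would check the sign of $L^\star$ and observe that, because $a'<0$,
\[
L^\star\ge 0\ \Longleftrightarrow\ -a' t_0+\frac{1}{2a'}\ge 0\ \Longleftrightarrow\ t_0\ge\frac{1}{2(a')^2}.
\]

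Thus for $t_0>\tfrac{1}{2(a')^2}$ the interior minimizer $L^\star$ is feasible and delivers the first branch. For $t_0\le\tfrac{1}{2(a')^2}$, the interior critical point falls outside $[0,\infty)$; by convexity of $\Phi$, the constrained minimum is then at the boundary $L=0$, giving $K^\star=\tfrac{k_2-1}{k_2}$, which is the second branch (and corresponds to the degenerate case $\tau\equiv 0$, with objective value $t_0^2$). The main subtlety I expect is bookkeeping the signs of $a'$ and $(a')^3$ in the formulas for $\mathbb{E}\tau$ and $\mathrm{Var}(\tau)$ to ensure $c_1,c_2>0$, and being careful that the feasible set is $K\ge\tfrac{k_2-1}{k_2}$ rather than all of $\mathbb{R}_{>0}$; once these are nailed down the rest is a one-variable convex optimization.
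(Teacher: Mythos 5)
Your proposal is correct and follows essentially the same route as the paper: both expand $\mathbb{E}(\tau-t_0)^2$ into a convex quadratic in $L=\ln\bigl(\tfrac{Kk_2}{k_2-1}\bigr)$ using the moments computed from $M_\tau$, and then split cases according to whether the unconstrained vertex $L^\star=\tfrac{k_2\sigma}{2}\bigl(\tfrac{1}{a'}-2t_0a'\bigr)$ lies in the feasible half-line $L\ge 0$, which is exactly the condition $t_0\gtrless\tfrac{1}{2(a')^2}$. Your bias--variance packaging and the explicit boundary-versus-interior argument are just a slightly tidier presentation of the paper's monotonicity discussion, with the same sign bookkeeping for $a'<0$.
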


\begin{proof}The simple calculation gives that\begin{equation}\nonumber\begin{aligned}
&\mathbb{E}(\tau-t_0)^{2}=\mathbb{E}\tau^{2}-2t_0\mathbb{E}(\tau)+t_0^{2}\\
&=\frac{1}{(k_2\sigma a')^{2}}\left(\ln \frac{Kk_2}{k_2-1}\right)^{2}+\left(\frac{2t_0}{k_2\sigma a'}-\frac{1}{k_2\sigma (a')^{3}}\right)\left(\ln \frac{Kk_2}{k_2-1}\right)+t_0^{2}
\end{aligned}
\end{equation}Hence, if $\left(\frac{2t_0}{k_2\sigma a'}-\frac{1}{k_2\sigma (a')^{3}}\right) \geq 0 \Longleftrightarrow t_0 \leq \frac{1}{2 (a')^{2}},$ then $\mathbb{E}(\tau-t_0)^{2}$ becomes an increasing function of $K$ and the problem becomes trivial ($K=\frac{k_2-1}{k_2}.$) If $\left(\frac{2t_0}{k_2\sigma a'}-\frac{1}{k_2\sigma (a')^{3}}\right)<0 \Longleftrightarrow t_0>\frac{1}{2 (a')^{2}},$ then $\mathbb{E}(\tau-t_0)^{2}$ get its minimum at$$K=\frac{k_2-1}{k_2}e^{\frac{k_2\sigma}{2}\left(\frac{1}{a'}-2t_0a'\right)}.$$A direct check is as follows: note that $t_0>\frac{1}{2 a'^{2}}$ implies $\left(\frac{1}{a'}-2t_0 a'\right)>0$ thus the above optimal level $K>\frac{k_2-1}{k_2}$ which is consistent with our previous calculation.\end{proof}Now, we consider one related but more general case. Recall the transaction fee $K$ is charged by the market organizer thus it can be viewed as some revenue or income to $\mathcal{A}_0.$ Consequently, the large population optimal stopping problem discussed in Section II, will naturally generate some future cash flow $(\tau_i^{*}, K)_{i=1}^{N}$ to $\mathcal{A}_0.$ Therefore, besides the above concern of $L^{2}-$deviation minimization, the market manager can also consider how to maximize its utility to such cash flow:$$\sum_{i=1}^{N} \mathbb{E}\left(e^{-\beta \tau_i^{*}}U(K)\right)$$where $U$ is the given utility function. Recall $\beta>0$ is our discounted factor introduced before. Considering $N\longrightarrow +\infty$ so we study the cash flow utility but in per capital. Combined with the minimization of $L^{2}-$deviation, we reach the following more general optimization problem:\begin{equation}\label{mixed}\min_{K}\gamma_1\mathbb{E}(\tau-t_0)^{2}-\gamma_2\mathbb{E}e^{-\beta \tau}U(K)\end{equation}for $\gamma_1, \gamma_2>0.$ Here, the first term $\gamma_1\mathbb{E}(\tau-t_0)^{2}$ represents the concern to future distribution of stopping times while the second term $\gamma_2\mathbb{E}e^{-\beta \tau}U(K)$ is concerning to the future utility of cash flows. Note that the higher transaction fee $K$, the less incentive the individual agent will stop its own business at early time hence the less present value received by $\mathcal{A}_0.$ Therefore, the gain functional in \eqref{mixed} represents the trade-off between keeping future stopping as close to given timing point as possible, and the utility from future cash flow generated. The further calculation gives$$\mathbb{E}\left(e^{-\beta \tau}U(K)\right)=U(K)\mathbb{E}e^{-\beta \tau}=U(K)\left(\frac{x^{*}}{x}\right)^{-k_2}=\frac{U(K)}{K}\frac{k_2-1}{k_2}.$$The payoff functional becomes
\begin{equation}\nonumber\begin{aligned}
&\Gamma(K):=\gamma_1\mathbb{E}(\tau-t_0)^{2}-\gamma_2\mathbb{E}e^{-\beta \tau}U(K)\\
&=\frac{\gamma_1}{(k_2\sigma a')^{2}}\left(\ln \frac{Kk_2}{k_2-1}\right)^{2}+\gamma_1\left(\frac{2t_0}{k_2\sigma a'}
-\frac{1}{k_2\sigma (a')^{3}}\right)\left(\ln \frac{Kk_2}{k_2-1}\right)-\frac{U(K)}{K}\frac{\gamma_2 (k_2-1)}{k_2}+\gamma_1t_0^{2}.
\end{aligned}
\end{equation}The first order necessary condition becomes:$$\Gamma'(K)=-\frac{\gamma_2(k_2-1)}{k_2}\left(U'(K)-\frac{U(K)}{K}\right)+\gamma_1\left(\frac{2t_0}{k_2\sigma a'}-\frac{1}{k_2\sigma (a')^{3}}\right)+\frac{2\gamma_1}{(k_2\sigma a')^{2}}\ln \frac{Kk_2}{k_2-1}=0.$$

\subsubsection{Linear function}
One special case is $U(K)=K,$ and we have the same result in Proposition 5.3.
\subsubsection{Power utility function}
Another special case is $U(K)=\frac{K^{\rho}}{\rho}$ for $\rho<1,$ and we have$$\Gamma'(K)=\Theta(K)+\gamma_1\left(\frac{2t_0}{k_2\sigma a'}-\frac{1}{k_2\sigma (a')^{3}}\right)=0$$where\begin{equation}\nonumber
\Theta(K)=-\gamma_2\left(1-\frac{1}{\rho}\right)\frac{k_2-1}{k_2}K^{\rho-1}+\frac{2\gamma_1}{(k_2\sigma a')^{2}}\ln \frac{Kk_2}{k_2-1}
\end{equation}Note that\begin{equation}\nonumber\left\{
\begin{aligned}
&\Gamma''(K)=\Theta'(K)=\frac{2\gamma_1}{(k_2\sigma a')^{2}}\frac{1}{K}-\gamma_2\left(\frac{k_2-1}{k_2}\right)\frac{(1-\rho)^{2}}{\rho}K^{\rho-2}\\
& \lim_{K\longrightarrow\frac{k_2-1}{k_2}}\Theta(K)=\gamma_2\left(\frac{1}{\rho}-1\right)\left(\frac{k_2-1}{k_2}\right)^{\rho}<+\infty,\\
&\lim_{K\longrightarrow +\infty}\Theta(K)=+\infty.
\end{aligned}\right.\end{equation}Therefore, we have the following two cases. For simplification, we introduce the following notations:

\begin{equation}\nonumber\left\{
\begin{aligned}
&\Delta_1=\left(\frac{2\gamma_1}{\gamma_2}
\frac{1}{(k_2\sigma a')^{2}}\frac{\rho}{(1-\rho)^{2}}\right)^{\frac{1}{2-\rho}}\\
&\Delta_2=\gamma_1\left(\frac{2t_0}{k_2\sigma a'}-\frac{1}{k_2\sigma (a')^{3}}\right)+\gamma_2\left(\frac{1}{\rho}-1\right)\left(\frac{k_2-1}{k_2}\right)^{\rho}.
\end{aligned}\right.\end{equation}

\textbf{Case 1}: if $\left(\frac{2\gamma_1}{\gamma_2}
\frac{k_2}{k_2-1}\frac{1}{(k_2\sigma a')^{2}}\frac{\rho}{(1-\rho)^{2}}\right)^{\frac{1}{1-\rho}}
< \frac{k_2-1}{k_2} \Longleftrightarrow \Delta_1<\left(\frac{k_2-1}{k_2}\right),$ then we have $\Gamma''(K)>0$ for $K \in (\frac{k_2-1}{k_2}, +\infty),$ thus it follows$$\arg\min_{K}\Gamma(K)=\begin{cases} \frac{k_2-1}{k_2}, \quad \text{if} \quad \Delta_2>0;
\\ \text{unique solution of}\quad \Gamma'(K)=0, \quad \text{if} \quad \Delta_2<0.\end{cases}$$

\textbf{Case 2}: if $\left(\frac{2\gamma_1}{\gamma_2}
\frac{k_2}{k_2-1}\frac{1}{(k_2\sigma a')^{2}}\frac{\rho}{(1-\rho)^{2}}\right)^{\frac{1}{1-\rho}}
\geq \frac{k_2-1}{k_2} \Longleftrightarrow \Delta_1 \geq \left(\frac{k_2-1}{k_2}\right),$ then we have $\Gamma''(K)<0$ for $K \in \left(\frac{k_2-1}{k_2}, \Delta_1\right),$ and $\Gamma''(K)<0$ for $K \in \left(\Delta_1, +\infty\right)$, thus it follows$$\arg\min_{K}\Gamma(K)=\begin{cases} \frac{k_2-1}{k_2}, \quad \text{if} \quad \Delta_2 >0, \Theta(\Delta_1)>0;
\\ \text{the larger root of}\quad \Gamma'(K)=0, \quad \text{if} \quad \Delta_2 >0, \Theta(\Delta_1)<0;\\
\text{the unique root of}\quad \Gamma'(K)=0, \quad \text{if} \quad \Delta_2 <0.\end{cases}$$

\subsection{Minimization of Kullback-Leibler (KL) divergence}

Now, we turn to consider the case when the market manager aims to have the empirical distribution $F_{N}$ of all stopping times, or its limit $F$ to best fit some given benchmark probability distribution $\pi(t)$ with support on $(0, +\infty)$. Moreover, assume $\pi$ is absolute continuous with respect to Lebesgue measure with density function $p(t)$. The economic meaning of such
distribution tracking is as follows. Unlike \emph{Case A, B} (namely, \emph{target expectation/variance}, or $L^{2}-$\emph{minimization of timing point}) where the tracking is mainly in its static manner, sometimes the market manager hopes to have the tracking in more dynamic way. Actually, it is often the case that $\mathcal{A}_0$ hopes the stopping times will best fit into some distribution in given industries to match some long-term planning. For example, for some sunset or declining industries, the local government hopes to upgrading it in some progressive and orderly manner. Therefore, it is hopeful the individual small firms in this industry will close their business distributed by some given prescribed distribution function $p(t)$ to coordinate the corresponding fiscal or employment policies.
Therefore, it aims to $\min_{K \in (\frac{k_2-1}{k_2}, +\infty)} ||\frac{1}{N}\sum_{i=1}^{N}\delta_{\tau_i}(0, t)-\pi(t) ||$ where $\delta$ is the Dirac measure. Alternatively, $\min_{K \in (\frac{k_2-1}{k_2}, +\infty)} ||F-\pi||$ in its limiting case when $N\longrightarrow +\infty$. Note that we require $K \in (\frac{k_2}{k_2-1}, +\infty)$ to make the optimal stopping problem in Section III nontrivial.

For sake of simplification, we focus on the limiting case in terms of the density function. Given the target density function $p(t),$ we can measure the performance of best fitting by the Kullback-Leibler (KL) divergence.\begin{definition}The Kullback-Leibler (KL) divergence of $q(t)$ from the targeted density function $p(t)$ is given by$$\mathcal{D}(p||q)=\int \ln \frac{p(t)}{q(t)}\cdot p(t)dt$$\end{definition}It is remarkable that $\mathcal{D}(p||q) \geq 0$ due to Jensen's inequality and $\mathcal{D}(p||q)=0 \Longleftrightarrow p=q$ due to Gibbs'inequality. Moreover, it follows that in general $\mathcal{D}(p||q) \neq \mathcal{D}(q||p)$ thus KL divergence is not distance. We have the following dynamic optimal tracking problem from large-population system.

\textbf{Minimization of KL-divergence.}
To find $K \in (\frac{k_2-1}{k_2}, +\infty)$ satisfying
\begin{equation}\nonumber\begin{aligned}
K \in \arg\min \mathcal{D}_{K}(p||q)
\end{aligned}
\end{equation}where $q(t)$ is the density function of Inverse Gaussian distribution $IG(\mu, \varrho)$ for\begin{equation}
\begin{aligned}
\mu=\left(-\frac{1}{a'k_2\sigma}\right)\ln \left(\frac{Kk_2}{k_2-1}\right), \quad \varrho=\left[\frac{1}{k_2\sigma}\ln \frac{Kk_2}{k_2-1}\right]^{2}.
\end{aligned}\end{equation}Explicitly, the density function of inverse Gaussian distribution is given by\begin{equation}\nonumber\begin{aligned}q(t)&=\left[\frac{\varrho}{2\pi t^{3}}\right]^{\frac{1}{2}}\exp \left(\frac{-\varrho(t-\mu)^{2}}{2\mu^{2}t}\right)\\&=\frac{\ln\left(\frac{Kk_2}{k_2-1}\right)}{k_2\sigma(2\pi t^{3})^{\frac{1}{2}}}\cdot \exp\left[-\frac{(a')^{2}}{2}\frac{(t-\mu)^{2}}{t}\right]\\&=\frac{\ln\left(\frac{Kk_2}{k_2-1}\right)}{k_2\sigma(2\pi t^{3})^{\frac{1}{2}}}\cdot \exp\left[-\frac{(a')^{2}}{2}\frac{\left(t+\frac{1}{a'k_2\sigma}\ln (\frac{Kk_2}{k_2-1})\right)^{2}}{t}\right].\end{aligned}\end{equation}
One algorithm based on the empirical
risk minimization (ERM) (see \cite{Vapnik}) is as follows:\begin{equation}\nonumber\begin{aligned}
& \widehat{K}=\arg \min_{K \in (\frac{k_2-1}{k_2}, +\infty)}\mathcal{D}_{K}(p||q)=\arg \min \int \ln \frac{p(t)}{q(t)}\cdot p(t)dt\\
&=\arg \min \left(-\int \ln q(t)\cdot p(t)dt\right)\\
&\approx \arg \max \int \ln q(t)\left[\frac{1}{n}\sum_{i=1}^{n}\delta(t-t_i) \right]dt\\&=\arg \max \frac{1}{n}\sum_{i=1}^{n}\ln q(t_i)=\arg \max \frac{1}{n}\prod_{i=1}^{n}q(t_i)
\end{aligned}\end{equation}which leads to the maximum likelihood estimator based on the sample points $\{t_i\}_{i=1}^{n}$ from distribution $p(t).$ Note that the definition of risk function (see \cite{Vapnik}):
$$R(K)=-\int\ln q(t)p(t)dt$$ and we have the following identity\begin{equation}\nonumber\begin{aligned}
\mathcal{D}_{K}(p||q)&=\int \ln \frac{p(t)}{q(t)}\cdot p(t)dt\\
&=R(K)+\int \ln p(t)p(t)dt=R(K)+\text{Entropy of $p$}
\end{aligned}\end{equation}Recall the benchmark density function $p$ does not depend on the selection of $K$ hence the minimization of KL-divergence is equivalent to the minimization of risk function. Moreover, the justification of applying KL-divergence as our performance measure can also be found by the following Bretagnolle-Huber inequality (see \cite{Vaart}):$$||p-q||_{1}=\int |p(t)-q(t)|dt \leq 2\sqrt{1-\exp(-\mathcal{D}_{K}(p||q))}\leq \sqrt{2\mathcal{D}_{K}(p||q)}.$$That is, the smaller value of KL-divergence implies a smaller $L_{1}-$norm.

\section{Conclusion}
In this paper, we introduce and analyze two classes of large population optimal stopping problems by considering the relative performance. The consistency condition and $\epsilon-$Nash equilibrium are established. We also discuss the inverse mean-field optimal stopping problem where the transaction fee $K$ is designed to meet some preferred statistical property of all stopping times generated.
Our present paper suggests various future research directions. For example, it is possible to introduce and study the more general inverse mean-field optimal stopping: that is, to find a time-dependent function $\pi: [0, \infty)\longrightarrow \mathbb{R}$ such that the decentralized optimal stopping times $\{\tau^{*}_i\}_{i=1}^{N}$ for
$$ \mathbb{E}\left[g\left(\tau_i, x_{i}(\tau_i), x^{N}(\tau_i)\right)+\pi(\tau_i))\right]$$will satisfy some given statistical property. Here, $x^{N}(\cdot)$ denotes some term characterizing the state-average, $\pi(\cdot)$ is called the transfer function (see e.g. \cite{Kruse}). Another example is to introduce some dynamics of market manager $\mathcal{A}_0$ and thus consider the optimal stopping problem with major-minor agents.

\end{document}